\newtheorem{theorem}{Theorem}[section]
\newtheorem{proposition}{Proposition}[section]
\newtheorem{lemma}{Lemma}[section]
\theoremstyle{definition}
\newtheorem{remark}{Remark}
\def\supp{\mathop{\rm supp}\nolimits}
\title{Controllability of degenerate parabolic equation with memory}
\author{{\sc Brahim Allal}\thanks{The author thanks the INdAM - GNAMPA Project 2019 {\it Controllabilit\`a di PDE in modelli fisici e in scienze della vita}.}\\
Facult\'e des Sciences et Techniques\\
Universit\'e Hassan 1er\\
Laboratoire MISI, B.P. 577\\
Settat 26000, Morocco
\\ email: b.allal@uhp.ac.ma
\\
{\sc Genni Fragnelli}\thanks{The author is a member of the Gruppo Nazionale per l'Analisi Ma\-te\-matica, la Probabilit\`a e le loro Applicazioni (GNAMPA) of the
Istituto Nazionale di Alta Matematica (INdAM) and she is supported by the FFABR {\it Fondo
per il finanziamento delle attivit\`a base di ricerca} 2017, by the INdAM - GNAMPA Project 2019 {\it Controllabilit\`a di PDE in modelli fisici e in scienze della vita},  by  Fondi di Ateneo 2015/16 of the University of  Bari {\em Problemi differenziali non linearii} and by PRIN 2017-2019 {\it Qualitative and quantitative aspects of nonlinear PDEs}.}\\
Dipartimento di Matematica\\ Universit\`{a} di Bari "Aldo Moro"\\
Via
E. Orabona 4\\ 70125 Bari - Italy\\ email: genni.fragnelli@uniba.it\\}
\date{}
\begin{document}
\maketitle

\begin{abstract}
In this paper, we analyze the null controllability property for a degenerate parabolic equation involving memory terms with a locally distributed control. We first derive a null controllability result for a nonhomogeneous degenerate heat equation via new Carleman estimates with weighted time functions that do not blow up at $t= 0$. Then this result is successfully used with a classical fixed point to obtain null controllability for the initial memory system.
\end{abstract}
\section{Introduction}\label{Sect_Introduction}
In this work we are concerned with the null controllability result for a degenerate parabolic equation with memory by a distributed control force. More precisely, we consider the following controlled system:
\begin{equation}\label{sys-memory1}
\left\{
\begin{array}{lll}
\displaystyle y_t - (a(x)  y_x )_x = \int\limits_0^t b(t,s,x)  y(s,x) \, ds + 1_{\omega} u &  & (t, x) \in Q=(0, T) \times (0, 1), \\
y(t,1)= 0, & & t \in (0, T),  \\
\begin{cases}
& y(t, 0) = 0,   \qquad   \quad (WD), \\
& (a y_x ) (t, 0)= 0 , \quad (SD), \\
\end{cases}  & & t \in (0, T),\\
y(0,x)= y_{0}(x),  & & x \in  (0, 1).
\end{array}
\right.
\end{equation}
Here, $\omega \Subset (0, 1)$ is a non-empty open set, $1_{\omega}$ is the corresponding characteristic function, $u = u(t, x)$ is the control function, $y = y(t, x)$ is the
state and $b=b(t,s,x)\in L^\infty((0, T) \times Q)$ is a memory kernel. Moreover, the diffusion coefficient $a$ 
vanishes at the boundary $x=0$ (i.e., $a(0) = 0$) and can be either weakly degenerate (WD), i.e.,
\begin{equation}\label{hyp_WD_left}
\left\{
\begin{array}{lll}
a \in C([0,1]) \cap C^1((0,1]), \; a(0)=0, \; a>0 \quad \text{in}\quad (0, 1], \\
\exists \, \alpha \in [0,1),\quad \text{such that}\quad x a'(x) \leq \alpha a(x),\quad \forall \, x \in [0,1],
\end{array}
\right.
\end{equation}
or strongly degenerate (SD), i.e.,
\begin{equation}\label{hyp_SD_left}
\left\{
\begin{array}{lll}
a \in C^1([0,1]), \; a(0)=0, \; a>0 \quad \text{in}\quad (0, 1], \\
\exists \, \alpha \in [1,2),\quad \text{such that}\quad x a'(x) \leq \alpha a(x),\quad \forall \, x \in [0,1],\\
\left\{
\begin{array}{ll}
\exists \, \beta \in (1, \alpha],\, x \mapsto \dfrac{a(x)}{x^{\beta}}\quad \text{is nondecreasing near}\quad 0,\quad \text{if}\quad \alpha > 1, \\
\exists \, \beta \in (0, 1),\, x \mapsto \dfrac{a(x)}{x^{\beta}}\quad \text{is nondecreasing near}\quad 0,\quad \text{if}\quad \alpha=1.
\end{array}
\right.
\end{array}
\right.
\end{equation}
A typical example of coefficient $a$ is the following:
$$ a(x) = x^{\alpha}, \quad \alpha \in (0, 2). $$

The null controllability of parabolic equations without memory (i.e. $b\equiv0$) is by now well understood, for both uniformly and degenerate diffusion coefficient, by means of distributed and boundary controls (see \cite{Hajjaj2013, Alabau2006, bfm2018, CMV2016, fm2013, fm2016, FI1996} and the references therein).

On the other hand, in the presence of memory terms, much less is known on the controllability of the underlying system. 

When $a=b=1$, S. Guerrero and O. Imanuvilov prove in \cite{Guerrero2013} that \eqref{sys-memory1} fails to be null controllable with a boundary control. Indeed, there exists a set of initial states that cannot be driven to $0$ in any positive final time. Then, similar result is proved by X. Zhou and H. Gao in \cite{Zhou2014} whenever $b$ is a non-trivial constant; in this paper it is also proved that the approximate controllability holds. Later on, these results are extended  in \cite{Zhou2018} to the context of one dimensional degenerate parabolic equation. In particular, the authors assume that $a(x) = x^{\alpha}$, being $x\in (0, 1)$, $ 0\leq \alpha < 1 $ and prove that the null controllability of \eqref{sys-memory1} fails whereas the approximate property holds in a suitable state space with a boundary control acting at the extremity $x = 0$ or $x = 1$.

Thus, it is  important to see which kind of conditions on $b$ we have to require so that the null controllability of \eqref{sys-memory1} holds. In \cite{Lavanya2009, Saktiv2008} R. Lavanya, K. Balachandran and B.R. Nagaraj obtained the null controllability of a nonlinear  and non degenerate version of \eqref{sys-memory1} assuming that the memory kernel  is sufficiently smooth and vanishes at the neighborhood of initial and final times. In particular,
\begin{equation}\label{condit_lavanya}
b(t,s,x)\equiv b(t,s) \quad \text{and} \quad \supp b( \cdot, s) \Subset (t_0, t_1), \quad 0 < t_0 < t < t_1 < T, \quad \forall s\in(0, T).
\end{equation}
The proof relies on Carleman estimates and a fixed point method. This assumption has been relaxed by Q. Tao and H. Gao in  \cite{TaoGao16}, where the authors showed that null controllability holds provided $b$ fulfills
\begin{equation}\label{hyp_kernel_Tao_Gao}
e^{\frac{C}{(T-t)}} b \in  L^{\infty}((0, T) \times Q)    
\end{equation}
for some positive constant $C$.

For related results on this subject, we refer to\cite{Zuazua2017} for wave equation, \cite{Barbu2000} for viscoelasticity equation, \cite{Munoz2003} for thermoelastic system and \cite{Yong2005} in the case of heat equation with hyperbolic memory kernel (see also the bibliography therein).

The purpose of this paper is to give a suitable condition on the memory kernel  $b$ in such a way that the degenerate parabolic equation with memory \eqref{sys-memory1} is null controllable, that is there exists a control $u \in L^2(Q)$ such that the associated solution of \eqref{sys-memory1}, corresponding to the initial data $y_0 \in L^2(0, 1)$,
satisfies
\begin{equation*}
y(T, \cdot) = 0 \qquad \text{in} \; (0, 1).
\end{equation*}
We include here a brief description of the proof strategy:
in a first step, we focus on the following nonhomogeneous degenerate parabolic system
\begin{equation}\label{sys-nonhom}
\left\{
\begin{array}{lll}
\displaystyle y_t - (a(x)  y_x )_x = f + 1_{\omega} u &  & (t, x) \in Q, \\
y(t,1)= 0 , & & t \in (0, T),  \\
\begin{cases}
& y(t, 0) = 0,   \qquad   \quad (WD), \\
& (a y_x ) (t, 0)= 0 , \quad (SD), \\
\end{cases}  & & t \in (0, T),\\
y(0,x)= y_{0}(x),  & & x  \in  (0, 1),
\end{array}
\right.
\end{equation}
for a given function $f \in L^2(Q)$.

In particular, we establish suitable Carleman estimates for the associated adjoint problem using some classical weight time functions that blow up to $+\infty$ as $t\rightarrow 0^-, T^+$. Then, using a weight time function not exploding in the neighborhood of $t=0$, we derive a new  modified Carleman estimate that would allow us to show null controllability of the underlying parabolic equation. As a consequence, we deduce null controllability result for some problems similar to the degenerate parabolic equation with memory. Finally, this controllability result combined with an appropriate application of Kakutani's fixed point Theorem allows us to obtain the null controllability result for the original system \eqref{sys-memory1} under a suitable condition on the kernel $b$. 

\begin{remark}
We believe that the null controllability of system \eqref{sys-memory1} can be obtained also following the same ideas in \cite{Saktiv2008, Lavanya2009}. More precisely, by means of classical duality arguments, the null controllability property can be reduced to an observability inequality for the adjoint parabolic problem
\begin{equation}\label{sys-adj-memory}
\left\{
\begin{array}{lll}
\displaystyle - v_t - (a(x)  v_x )_x = \int\limits_t^T b(s,t,x)  v(s,x) \, ds &  & (t, x) \in Q, \\
v(t,1)= 0 , & & t \in (0, T),  \\
\begin{cases}
& v(t, 0) = 0,   \qquad   \quad (WD), \\
& (a v_x )  (t, 0)= 0 , \quad (SD), \\
\end{cases}  & & t \in (0, T),\\
v(T,x)= v_{T}(x),  & & x \in  (0, 1),
\end{array}
\right.
\end{equation}
where $v_T \in L^2(Q)$ and $g\in L^2(Q)$. 

Such an inequality is proved by R. Lavanya and K. Balachandran in the aforementioned reference through the use of a new Carleman estimate for \eqref{sys-adj-memory} under a strict restriction on the memory kernel. Indeed, in order to treat the integral term in \eqref{sys-adj-memory}, the coefficient $b$ need to be sufficiently smooth and to satisfy condition \eqref{condit_lavanya}. One could expects the same condition for system \eqref{sys-memory1}.

However, in this paper, we follow the methodology used in  \cite{TaoGao16} for the treatment of nondegenerate equation which permits us to show that system 
\eqref{sys-memory1} is null controllable provided the coefficient $b$ satisfies only some exponential decay at the final time $t=T$ (see \eqref{hypoth_kernel}). 
\end{remark}
The outline of this paper is as follows: Section \ref{sect_well_posed} is devoted to the well-posedness of systems \eqref{sys-memory1} and \eqref{sys-nonhom} in suitable weighted spaces. In Section \ref{sect_carleman_estimate}, we develop a new Carleman estimate for the adjoint problem to the nonhomogeneous parabolic equation \eqref{sys-nonhom} and, in Section \ref{sect_null_control_nonhomog}, we apply such an estimate to deduce null controllability for \eqref{sys-nonhom}. In Section \ref{sect_null_control_memory}, using the Kakutani's fixed point Theorem, we prove the null controllability result for the degenerate parabolic equation with memory \eqref{sys-memory1} under suitable condition on the memory kernel. Finally, in Section \ref{sect_comments}, we discuss various extensions of our result and give some perspectives related to this work.

%The main result in this paper reads as follows.

\section{Well-posedness results} \label{sect_well_posed} 
The goal of this section is to study the well-posedness results for \eqref{sys-memory1} and \eqref{sys-nonhom}. First, we recall the following weighted Sobolev spaces (in the sequel, a.c. means  absolutely continuous):

In the (WD) case:
\begin{align*}
H_a^1:= \Big\{ y \in L^2(0,1): y\; &\text{a.c. in}\, [0,1],\; \sqrt{a}y_x \in L^2(0,1)\;\text{and}\; y(1)=y(0)=0 \Big\}
\end{align*}
and
\begin{align*}
H_a^2:= \Big\{ y \in H_a^1(0, 1): ay_x \in H^1(0,1)\Big\}.
\end{align*}
In the (SD) case:
\begin{align*}
H_a^1:= \Big\{ y \in L^2(0,1): y\, &\text{locally a.c. in}\, (0,1],\quad\sqrt{a}y_x \in L^2(0,1)\,\text{and}\, y(1)=0 \Big\}
\end{align*}
and
\begin{align*}
H_a^2:&= \Big\{ y \in H_a^1(0, 1): ay_x \in H^1(0,1)\Big\}\\
&=\Big\{ y \in L^2(0,1): y\, \text{locally a.c. in}\, (0,1], ay\in H^1_0(0,1),\\
&\qquad ay_x \in H^1(0,1) \,\text{and}\, (ay_x)(0)=0 \Big\}.
\end{align*}
In both cases, the norms are defined as follow
\begin{align*}
\| y \|_{H_a^1}^2:=  \| y \|_{L^2(0,1)}^2 +  \| \sqrt{a} y_x \|_{L^2(0,1)}^2,\qquad
\| y \|_{H_a^2}^2 :=  \| y \|_{H_a^1}^2 +  \| (a y_x )_x \|_{L^2(0,1)}^2.
\end{align*}

We recall the following well-posedness result for system \eqref{sys-nonhom} (see, for instance, \cite{Alabau2006, Campiti1998}).
\begin{proposition}\label{prop-Well-posed_nonhom}
Assume that $y_0 \in L^2(0,1)$, $f \in L^2(Q)$ and $u\in L^2(Q)$. Then, system \eqref{sys-nonhom} admits a unique solution
\begin{equation}\label{Class_W_T_memory}
y \in W_T := L^2(0, T; H_a^1(0,1))\cap C([0, T]; L^2(0,1))
\end{equation}
such that
\begin{equation}\label{energy-nonhom1}
\|y\|_{L^2(0, T; H_a^1(0,1))} +\|y\|_{C([0, T]; L^2(0,1))} 
\leq C \Big(\|y_0\|_{L^2(0,1)} + \|f\|_{L^2(Q)} + \|1_{\omega} u\|_{L^2(Q)} \Big),
\end{equation}
for some positive constant $C$.
Moreover, if  $y_0\in H_a^{1}(0,1)$, then
\begin{equation*}
y \in Z_T := L^2(0, T; H_{a}^{2}(0,1))\cap H^1(0, T; L^2(0,1))
\end{equation*}
and
\begin{equation}\label{energy-nonhom2}
\|y\|_{L^2(0, T; H_a^{2}(0,1))} +\|y\|_{H^1(0, T; L^2(0,1))} 
\leq C \Big(\|y_0\|_{H_a^{1}(0,1)} + \|f\|_{L^2(Q)} + \|1_{\omega} u\|_{L^2(Q)} \Big),
\end{equation}
for some positive constant $C$.
\end{proposition}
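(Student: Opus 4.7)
The plan is to set up the problem in the semigroup framework associated with the degenerate operator, and then to upgrade from the mild solution to the two regularity levels by classical energy estimates. Specifically, let $A: D(A) \subset L^2(0,1) \to L^2(0,1)$ be defined by $Ay := (a(x) y_x)_x$ with $D(A) := H_a^2(0,1)$. The first task is to show that $A$ is self-adjoint and nonpositive. The key computation is the integration by parts
\[
\langle Ay, z\rangle_{L^2} = -\int_0^1 a(x) y_x z_x\, dx \qquad \forall y,z\in H_a^2,
\]
where the boundary term at $x=1$ vanishes because all elements of $H_a^1$ satisfy $y(1)=0$, and the boundary term at $x=0$ vanishes in the (WD) case because $y(0)=0$, and in the (SD) case because $(ay_x)(0)=0$ (which is built into $H_a^2$). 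This dissipativity plus density of $D(A)$ and standard range arguments give that $A$ generates an analytic $C_0$-semigroup $\{e^{tA}\}_{t\ge 0}$ on $L^2(0,1)$, as established in \cite{Alabau2006, Campiti1998}.

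For $y_0\in L^2(0,1)$ and $F := f + 1_\omega u \in L^2(Q)$, Duhamel's formula
\[
y(t) = e^{tA} y_0 + \int_0^t e^{(t-s)A} F(s)\, ds
\]
immediately gives a unique mild solution $y\in C([0,T];L^2(0,1))$. To obtain the additional regularity $y\in L^2(0,T;H_a^1(0,1))$ and the estimate \eqref{energy-nonhom1}, I would test the equation against $y$, integrate over $(0,t)\times(0,1)$, and use the integration-by-parts identity above to obtain
\[
\tfrac{1}{2}\|y(t)\|_{L^2}^2 + \int_0^t\|\sqrt{a}\,y_x\|_{L^2}^2\,ds \;=\; \tfrac{1}{2}\|y_0\|_{L^2}^2 + \int_0^t \langle F, y\rangle\,ds,
\]
then bound the last term by $\tfrac{1}{2}\|F\|_{L^2}^2 + \tfrac{1}{2}\|y\|_{L^2}^2$ and apply Gronwall.

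For the second part, when $y_0\in H_a^1(0,1)$, the strategy is to test against $y_t$ instead. Using
\[
\int_0^1 (a y_x)_x\, y_t\, dx = -\tfrac{d}{dt}\Big(\tfrac{1}{2}\|\sqrt{a}\,y_x\|_{L^2}^2\Big),
\]
one gets $\|y_t\|_{L^2}^2 + \tfrac{d}{dt}\|\sqrt{a}\,y_x\|_{L^2}^2 \le \|F\|_{L^2}^2$, which after integration in time provides the $H^1(0,T;L^2)$ bound on $y$ and the $L^\infty(0,T;H_a^1)$ bound on $\sqrt{a}\,y_x$; the $L^2(0,T;H_a^2)$ bound then follows by reading $(ay_x)_x = y_t - F$ pointwise in time in $L^2(0,1)$. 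Equivalently, the analyticity of the semigroup together with $y_0\in D((-A)^{1/2}) = H_a^1$ and the maximal $L^2$-regularity enjoyed by generators of analytic semigroups on Hilbert spaces yields \eqref{energy-nonhom2} at once.

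The main technical obstacle is the rigorous justification of the boundary integration by parts at the degenerate endpoint $x=0$. In the (WD) case this is straightforward because $y(0)=0$ and $a(0)=0$; in the (SD) case one must argue by a density/approximation procedure, using the Hardy-type inequalities available under \eqref{hyp_SD_left} (in particular the condition $xa'\le \alpha a$ with $\alpha<2$ and the monotonicity assumption on $a(x)/x^\beta$), to justify that the trace $(ay_x)(0)$ makes sense and equals zero for elements of $H_a^2$. Once this is verified, everything else proceeds by routine semigroup and energy arguments; the corresponding details are carried out in \cite{Alabau2006, Campiti1998}, so the proof can be concluded by referencing these works after outlining the scheme above.
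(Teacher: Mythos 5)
Your proposal is correct and follows essentially the same route as the paper, which does not prove this proposition itself but recalls it from \cite{Alabau2006, Campiti1998}; those references establish the result precisely via the semigroup generation for the degenerate operator on $H_a^2$ and the two energy estimates (testing against $y$ and against $y_t$) that you outline. The technical points you flag — the vanishing of the boundary term at the degenerate endpoint and the identification of $H_a^1$ with the domain of the square root of the operator — are exactly the ones handled in the cited works, so your sketch matches the intended argument.
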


Existence and uniqueness of solution for system \eqref{sys-memory1} are established in the following result:

\begin{proposition}\label{prop-Well-posed_memory}
Assume that $y_0 \in L^2(0,1)$ and $u\in L^2(Q)$. Then, system \eqref{sys-memory1} admits a unique solution
$
y \in W_T$.
\end{proposition}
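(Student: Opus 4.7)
My plan is to set up a Banach fixed-point argument built on Proposition \ref{prop-Well-posed_nonhom}. For $z\in C([0,T];L^2(0,1))$, set the Volterra source
\[
\mathcal{B}(z)(t,x):=\int_0^t b(t,s,x)\,z(s,x)\,ds,
\]
and let $\Lambda(z):=y$ be the unique solution in $W_T$ of the nonhomogeneous system \eqref{sys-nonhom} with datum $y_0$, control $u$, and forcing $\mathcal{B}(z)$. Since $b\in L^\infty((0,T)\times Q)$, Cauchy--Schwarz yields $\mathcal{B}(z)\in L^2(Q)$, so Proposition \ref{prop-Well-posed_nonhom} guarantees that $\Lambda$ is a well-defined map from $C([0,T];L^2(0,1))$ into $W_T\subset C([0,T];L^2(0,1))$. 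Any fixed point of $\Lambda$ is then a solution of \eqref{sys-memory1} in $W_T$, and conversely.

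The key step is to prove that $\Lambda$ is a strict contraction in a suitable equivalent norm. For $z_1,z_2$ in the ambient space, linearity shows that $Y:=\Lambda(z_1)-\Lambda(z_2)$ solves \eqref{sys-nonhom} with $y_0=0$, $u=0$ and forcing $\mathcal{B}(z_1-z_2)$. Applying the energy estimate \eqref{energy-nonhom1} on the subcylinder $(0,t_*)\times(0,1)$ for any $t_*\in(0,T]$ (with a constant that can be bounded by the one corresponding to the full interval $[0,T]$), and using $|\mathcal{B}(z_1-z_2)(t,x)|^2\le M^2 t\int_0^t|(z_1-z_2)(s,x)|^2\,ds$ with $M:=\|b\|_{L^\infty}$, I obtain
\[
\|Y(t_*)\|_{L^2(0,1)}^2\;\le\;C\,M^2\,T\int_0^{t_*}\!\!\int_0^s\|(z_1-z_2)(\tau)\|_{L^2(0,1)}^2\,d\tau\,ds.
\]
Equipping $C([0,T];L^2(0,1))$ with the Bielecki-type norm $\|z\|_\lambda:=\sup_{t\in[0,T]}e^{-\lambda t}\|z(t)\|_{L^2(0,1)}$ (equivalent to the sup-norm) and exploiting the bound $\int_0^s e^{2\lambda\tau}\,d\tau\le e^{2\lambda s}/(2\lambda)$, the iterated integral is absorbed into a factor $e^{2\lambda t_*}/(4\lambda^2)$, giving
\[
\|\Lambda(z_1)-\Lambda(z_2)\|_\lambda\;\le\;\frac{M\sqrt{CT}}{2\lambda}\,\|z_1-z_2\|_\lambda.
\]
Choosing $\lambda$ large enough makes $\Lambda$ a strict contraction.

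The Banach fixed-point theorem then yields a unique $y\in C([0,T];L^2(0,1))$ with $\Lambda(y)=y$; because $\Lambda(y)\in W_T$, this $y$ is the unique solution of \eqref{sys-memory1} in $W_T$. The one point that requires attention is the interplay between the Volterra memory term and the energy inequality for the degenerate equation: the weighted-norm trick is tailored to causal integral operators and dispenses with any smallness hypothesis on $T$ or on $\|b\|_{L^\infty}$, while the constant in \eqref{energy-nonhom1} may be taken independent of the running time horizon (a standard feature of the energy method underlying Proposition \ref{prop-Well-posed_nonhom}, unaffected by the degeneracy at $x=0$). An equivalent alternative, avoiding the Bielecki norm, is to iterate the Volterra estimate $n$ times and observe that $\Lambda^n$ becomes a contraction for $n$ large, yielding the same existence and uniqueness conclusion.
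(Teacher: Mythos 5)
Your argument is correct, but it follows a genuinely different route from the paper. The paper does not run a fixed-point iteration at all: it recasts \eqref{sys-memory1} as an abstract Volterra integro-differential Cauchy problem $y'+Ay=\int_0^t k(t,s,y(s))\,ds+f$, verifies the boundedness, Lipschitz, coercivity and kernel-Lipschitz hypotheses $(a)$--$(d)$ of the abstract existence theorem of Grasselli--Lorenzi \cite[Theorem 1.1]{Grasselli1991}, and then upgrades the resulting $L^\infty(0,T;L^2)$ regularity to $C([0,T];L^2)$ via the Aubin--Lions theorem. Your Picard--Bielecki scheme instead builds the solution directly from Proposition \ref{prop-Well-posed_nonhom}: the map $z\mapsto\Lambda(z)$ is well defined since $b\in L^\infty((0,T)\times Q)$ makes $\mathcal{B}(z)\in L^2(Q)$, the difference $\Lambda(z_1)-\Lambda(z_2)$ solves the nonhomogeneous problem with zero data by linearity, and the causal structure of the Volterra term together with the exponential weight $e^{-\lambda t}$ yields a contraction for large $\lambda$ with no smallness condition on $T$ or $\|b\|_\infty$. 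What each approach buys: yours is elementary and self-contained (it needs only the energy estimate \eqref{energy-nonhom1}, with the standard remark that the constant on a subcylinder $(0,t_*)$ is dominated by the one for $(0,T)$, and it delivers $C([0,T];L^2)$-continuity for free since the fixed point lands in $W_T$); the paper's route is shorter to write once the abstract theorem is accepted and extends without change to genuinely nonlinear kernels $k(t,s,y)$ that are merely Lipschitz in $y$, at the cost of an external reference and the extra Aubin--Lions step. Both arguments establish exactly the stated existence and uniqueness in $W_T$.
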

We emphasis that, in order to prove null controllability result for  \eqref{sys-memory1} (see Theorem \ref{Thm_null_Control_memo_2}), we only need existence and uniqueness in the case $y_0 \in L^2(0,1)$.

\begin{proof}
The proof of this Proposition is a consequence of \cite[Theorem 1.1]{Grasselli1991}. 

First of all, we transform \eqref{sys-memory1} into the following Cauchy problem
\begin{equation}\label{Cauch_problem}
\left\{
\begin{array}{ll}
\displaystyle y'(t) + Ay(t) = \int\limits_0^t k(t,s, y(s)) \, ds + f(t), \quad t \in (0, T), \\
y(0)= y_{0}, 
\end{array}
\right.
\end{equation}
where
$$ A y(t) := - (a y_x(t))_x, \quad f(t) := 1_{\omega} u(t), \quad \text{for a.e. } t \in (0, T), $$
$$ k(t,s,y(s)) : =  b(t,s, \cdot)  y(s), \quad \text{for a.e. } (t,s) \in (0, T)^2.$$

Next, we are going to check that \eqref{Cauch_problem} satisfies the assumptions in the aforementioned Theorem.
To this aim, let $H_a^{-1}(0,1)$ be the dual space of $H_a^1(0,1)$ with respect to the pivot space
$L^2(0,1)$, endowed with the natural norm
\begin{equation*}
\| z \|_{H_a^{-1}}:=  \sup_{\| y \|_{H_a^{1}}=1} \langle z, y \rangle_{H_a^{-1}, H_a^{1}}.
\end{equation*}
Observe that
$$ \langle Ay, z \rangle_{H_a^{-1}, H_a^{1}} = \int_0^1 a  y_x z_x \, dx, \quad \forall  z \in H_a^1(0,1),$$
$$ \langle k(t,s, y), z \rangle_{H_a^{-1}, H_a^{1}} = \int_0^1 b(t,s,x) y z \, dx, \quad \text{for a.e.} \, (t,s) \in (0, T)^2, \quad \forall z \in H_a^1(0,1), $$
for any $ y \in H_a^1(0,1)$.

Hence, one can check easily that the operators $A$ and $k$ satisfy the following properties:
\begin{enumerate}
\item [$(a)$] there exists a positive constant $C$ such that   $ \|Ay\|_{H_a^{-1}} \leq C  \|y\|_{H_a^1},\quad \forall y \in H_a^1(0,1);$
\item [$(b)$] there exists a positive constant $C$ such that $$ \|A y_1 - A y_2\|_{H_a^{-1}} \leq C  \| y_1 - y_2 \|_{H_a^1}, \quad \text{for any} \; y_1 , y_2 \in H_a^1(0,1);$$
\item [$(c)$] $\exists \;\gamma >0 \text{ and }  \lambda > 0 $ such that $$ \langle A y_1 - A y_2, y_1 - y_2 \rangle_{H_a^{-1}, H_a^{1}} + \lambda \| y_1 - y_2\|_{L^2(0,1)}^2 \geq \gamma  \|y_1 - y_2\|_{H_a^1}^2,$$
for any  $ y_1, y_2 \in H_a^1(0,1);$
\item [$(d)$] there exists a function 
$\beta: (0, T)^2 \mapsto \mathbb{R}^+$ such that $$ \| (k(t,s,y_1)- k(t,s,y_2)) \|_{H_a^{-1}} \leq \beta(t,s) \|y_1 - y_2\|_{H_a^1}, \quad \text{for a.e. } (t,s) \in (0, T)^2, $$ for any  $ y_1, y_2 \in H_a^1(0,1).$

Besides $\beta$ is explicitly given by
$$\beta(t,s) :=\|b(t,s,\cdot)\|_{L^{\infty}(0,1)}, \quad \text{for a.e. } (t,s) \in (0, T)^2.$$
Then, taking into account the fact that $b \in L^{\infty} ((0,T) \times Q)$, $f \in L^2(Q)$ and in view of \cite[Remark 1.2, 1.3]{Grasselli1991}, 
we infer that all the assumptions of \cite[Theorem 1.1]{Grasselli1991} are fulfilled. 
Consequently, the problem \eqref{Cauch_problem} has a unique solution 
 $$ y \in L^2(0, T; H_a^1(0,1))\cap L^{\infty}(0, T; L^2(0,1))$$ 
 with $\quad y_t \in L^2(0, T; H_a^{-1}(0,1)).$
 
Moreover, by Aubin Lions Theorem we also have 
 $$ y \in C([0, T]; L^2(0,1)) .$$
Thus \eqref{Class_W_T_memory} is proved.
\end{enumerate}
\end{proof}
\section{Carleman estimates} \label{sect_carleman_estimate}
The goal of this section is to establish a suitable Carleman estimates for the following adjoint parabolic system 
\begin{equation}\label{sys-adj-nonhom}
\left\{
\begin{array}{lll}
\displaystyle - v_t - (a(x)  v_x )_x = g &  & (t, x) \in Q, \\
v(t,1)= 0 , & & t \in (0, T),  \\
\begin{cases}
& v(t, 0) = 0,   \qquad   \quad (WD), \\
& (a v_x )  (t, 0)= 0 , \quad (SD), \\
\end{cases}  & & t \in (0, T),\\
v(T,x)= v_{T}(x),  & & x \in  (0, 1),
\end{array}
\right.
\end{equation}
where $v_T \in L^2(0,1)$ and $g\in L^2(Q)$.

As a first step, we introduce the following weight functions 
\begin{equation}\label{weightfunc_deg}
\begin{aligned}
&\psi(x):= \gamma \Big(\int_{0}^{x}\frac{y}{a(y)}\,dy - d\Big),\quad \theta(t):=\frac{1}{\big[t(T-t)\big]^4},\\
&\varphi(t,x):=\theta(t)\psi(x),
\end{aligned}
\end{equation} 
%===========================
Now, let $\tilde{\omega}$ be an arbitrary open subset of $\omega$ and $ \rho \in C^2([0, 1])$ be such that
$$ \rho>0, \; \text{in} \; (0, 1), \quad \rho(0)=\rho(1)=0 \quad \text{and} \quad \rho_x \neq 0, \; \text{in} \; [0, 1]\backslash \tilde{\omega}$$
and define 
\begin{equation}\label{weightfunc_nondeg}
 \Psi(x) := e^{\lambda \rho(x)} - e^{2\lambda \|\rho\|_{\infty}}, \quad \eta(t,x) := \theta(t) \Psi(x).
\end{equation}
The parameters $\lambda$, $d$ and $\gamma$ are positive constant satisfy
\begin{equation}\label{condition_d_gamma}
d>d^\star:=\displaystyle\int_0^1 \frac{y}{a(y)}\,dy, \qquad \gamma >  \frac{e^{2\lambda \|\rho\|_{\infty}}}{(d - d^{\ast})} \end{equation}
and to be specified later on. It clearly follows from \eqref{condition_d_gamma} that
\begin{equation}\label{caract_of_psi}
-\gamma d \leq \psi(x) < 0, \quad \text{for all} \; x \in [0, 1],
\end{equation}
\begin{equation}\label{compar_varphi_eta}
\psi(x) \leq \Psi(x), \;\, \text{for all} \; x \in [0, 1], \;\, \varphi(t,x) \leq \eta(t,x), \;\, \text{for all} \;(t,x) \in Q
\end{equation}
Moreover, we readily have from the definition of the function $\theta$ that 
\begin{equation}\label{bound_theta}
| \theta'(t)| \leq C \theta^{\frac{3}{2}}(t), \;\, \forall \, t \in [0, T], \;\, \text{and} \quad \theta(t) \rightarrow + \infty, \quad \text{as} \quad t \rightarrow 0^-, T^+.    
\end{equation}
\begin{remark}\label{remrk_on_theta}
All the results stated in this paper remains true when the expression of the weighted time function 
$\theta(t):=\frac{1}{[t(T-t)]^4}$ is replaced by $\theta(t):=\frac{1}{[t(T-t)]^2}$. We refer to  \cite[Remark 1]{Hajjaj2013} for a discussion on this fact. 

Moreover, if the problem is considered in $[t_0, T]$ with $t_0>0$, these results still hold considering $\theta(t):=\frac{1}{[(t- t_0)(T-t)]^4}$ or $\theta(t):=\frac{1}{[(t- t_0)(T-t)]^2}$.
\end{remark}
We also remind the following Hardy Poincar\'e inequality, which turns out to be a fundamental tool in proving Carleman estimates:
\begin{proposition}{\cite[Proposition 2.1]{Alabau2006}} \label{prop_hardy}
There is a positive constant $C_{HP}$ such that, for every $y\in H_a^1(0,1)$, the following inequality holds
\begin{equation}\label{harpoideginequality}
\int_0^1\frac{a(x)}{x^2}y^2(x)\,dx\leq C_{HP}\int_0^1a(x)|y_x(x)|^2\,dx.
\end{equation}
\end{proposition}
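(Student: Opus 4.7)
The plan is to exploit the structural assumption $xa'(x)\le \alpha a(x)$ via a single weighted integration by parts, which is the standard route for Hardy type inequalities adapted to degenerate coefficients. Observe that
\[
\frac{d}{dx}\!\left(\frac{a(x)}{x}\right)=\frac{xa'(x)-a(x)}{x^{2}}\le (\alpha-1)\frac{a(x)}{x^{2}},
\]
so in the weakly degenerate regime $\alpha<1$ one obtains the pointwise lower bound
\[
-\frac{d}{dx}\!\left(\frac{a(x)}{x}\right)\ge (1-\alpha)\,\frac{a(x)}{x^{2}}.
\]
This identity turns the weight $a(x)/x^{2}$ into an exact derivative, which is exactly what makes integration by parts effective.

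Next, for $y\in H_a^{1}(0,1)$, I would multiply the bound above by $y^{2}$ and integrate over $(0,1)$. An integration by parts produces
\[
(1-\alpha)\!\int_{0}^{1}\!\frac{a(x)}{x^{2}}y^{2}\,dx\le \Big[-\tfrac{a(x)}{x}y^{2}\Big]_{0}^{1}+2\!\int_{0}^{1}\!\frac{a(x)}{x}\,y\,y_{x}\,dx.
\]
The boundary term at $x=1$ vanishes since $y(1)=0$; the one at $x=0$ must be shown to vanish as well, using $y\in H_a^{1}(0,1)$, the absolute continuity of $y$ on $[0,1]$ in the (WD) case, and the comparison $a(x)\gtrsim x^{\alpha}$ near $0$ coming from the fact that $a(x)/x^{\alpha}$ is non-increasing. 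Applying Cauchy--Schwarz to the remaining integral gives
\[
2\!\int_{0}^{1}\!\frac{a(x)}{x}\,y\,y_{x}\,dx\le 2\!\left(\int_{0}^{1}\!\frac{a(x)}{x^{2}}y^{2}\,dx\right)^{\!1/2}\!\left(\int_{0}^{1}\!a(x)\,y_{x}^{2}\,dx\right)^{\!1/2},
\]
and absorbing the first factor yields the desired inequality with constant $C_{HP}=4/(1-\alpha)^{2}$.

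For the strongly degenerate regime $\alpha\in[1,2)$, the same algebraic computation breaks down because $(1-\alpha)\le 0$. The strategy I would follow is to use the auxiliary exponent $\beta$ from \eqref{hyp_SD_left}: since $x\mapsto a(x)/x^{\beta}$ is non-decreasing near $0$ with $\beta<2$, one can reproduce the argument above with the surrogate weight $a(x)/x^{\beta}$ (or with $x^{\beta-1}$ in place of $a/x$) and then use the monotonicity comparison to pass back to $a(x)/x^{2}$. A density argument reducing to smooth functions compactly supported away from $0$ is needed to legitimize the boundary manipulations, because elements of $H_a^{1}(0,1)$ are only locally absolutely continuous on $(0,1]$ in that case.

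The main obstacle is precisely the vanishing of the boundary term $\tfrac{a(x)}{x}y^{2}(x)$ at $x=0$ in the SD case: the function $y$ need not even have a trace at $0$, and one must combine the monotonicity assumption on $a(x)/x^{\beta}$ with the finiteness of $\int_{0}^{1}a(x)y_{x}^{2}\,dx$ to show that this boundary term vanishes along a suitable sequence. Everything else is a textbook Hardy argument; the nontrivial structural input is entirely contained in the inequality $xa'(x)\le \alpha a(x)$ together with the auxiliary condition on $\beta$.
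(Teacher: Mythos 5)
First, note that the paper does not prove this proposition at all: it is quoted verbatim from \cite[Proposition 2.1]{Alabau2006}, so there is no internal proof to match your argument against; what follows compares your sketch with the standard argument behind the cited result. Your treatment of the weakly degenerate case is correct and is essentially that argument: $xa'\le\alpha a$ gives $-\frac{d}{dx}\big(\frac{a}{x}\big)\ge(1-\alpha)\frac{a}{x^{2}}$, the boundary term at $0$ is killed by $\frac{a(x)}{x}\,y(x)^{2}\le\frac{1}{1-\alpha}\int_{0}^{x}a\,y_{x}^{2}\,ds\to0$ (which follows from $y(0)=0$ and $a(s)\ge a(x)(s/x)^{\alpha}$, i.e.\ the nonincreasing character of $a/x^{\alpha}$), and Cauchy--Schwarz plus absorption yields $C_{HP}=4/(1-\alpha)^{2}$ after the usual truncation to $(\epsilon,1)$.

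The strongly degenerate case, however, contains a genuine gap as you have set it up. Replacing $a/x$ by $x^{\beta-1}$ and ``passing back'' to $a/x^{2}$ requires a \emph{two-sided} comparison between $a(x)$ and $x^{\beta}$: the monotonicity of $a/x^{\beta}$ near $0$ gives only $a(x)\le M x^{\beta}$ there (needed to dominate $\frac{a}{x^{2}}$ by $Mx^{\beta-2}$), but after the power-weight Hardy inequality you are left with $\int x^{\beta}y_{x}^{2}$, and returning to $\int a\,y_{x}^{2}$ would need $x^{\beta}\le Ca(x)$, which is false in general (take $a(x)=x^{3/2}$ and $\beta=5/4$). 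The argument that actually works for $\beta>1$ keeps $a$ in play throughout: writing $\frac{a}{x}=\frac{a}{x^{\beta}}\,x^{\beta-1}$ and using $\big(\frac{a}{x^{\beta}}\big)'\ge0$ near $0$ gives
\begin{equation*}
\Big(\frac{a}{x}\Big)'=\Big(\frac{a}{x^{\beta}}\Big)'x^{\beta-1}+(\beta-1)\frac{a}{x^{2}}\;\ge\;(\beta-1)\frac{a}{x^{2}},
\end{equation*}
and upon integrating by parts on $(\epsilon,x_{0})$ the boundary contribution at $\epsilon$ is $-\frac{a(\epsilon)}{\epsilon}y(\epsilon)^{2}\le0$ and can simply be discarded --- so, contrary to your last paragraph, no trace of $y$ at $x=0$ and no vanishing of that term along a sequence is needed; the term at $x_{0}$ is controlled by a standard Poincar\'e estimate on $[x_{0},1]$ using $y(1)=0$ and the nondegeneracy of $a$ there. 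Finally, the subcase $\alpha=1$ with $\beta\in(0,1)$ is not covered by either of your suggestions (the sign $(\beta-1)<0$ reappears) and requires a separate argument; your sketch does not address it.
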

Then, we have the following Carleman estimate:
\begin{theorem}\label{thm-carleman-1}
Let $T>0$. There exist two positive constants $C$ and $s_0$, such that the solution $v \in Z_T$ of \eqref{sys-adj-nonhom} satisfies
\begin{align}\label{Carl_estimate-1}
\int\!\!\!\!\!\int_{Q} \Big(s \theta  a(x) v_{x}^{2} &+ s^{3} \theta^3 \frac{x ^2}{ a(x)} v^{2}\Big) e^{2s\varphi}\,dx\,dt \notag \\ &\leq C\Big(\int\!\!\!\!\!\int_{Q} g^2 e^{2s\eta}\,dx\,dt  +  \int\!\!\!\!\!\int_{Q_{\omega}} s^3 \theta^3 v^2 e^{2s\eta} \,dx dt\Big)
\end{align}
for all $s \geq s_0$. Here $Q_{\omega} = (0,T) \times \omega$.
\end{theorem}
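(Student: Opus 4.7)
The plan is to combine a degenerate Carleman estimate on $(0,1)$, proven by the classical conjugation and symmetrization technique with the weight $\varphi$, with a standard Fursikov--Imanuvilov nondegenerate Carleman estimate on a subinterval $(\xi,1)$ bounded away from the degenerate boundary, using the weight $\eta$. The two pieces are glued with a spatial cutoff whose derivative is supported inside $\omega$, so that the resulting error terms reduce to distributed observations on $Q_{\omega}$.

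For the degenerate piece I would set $w := e^{s\varphi} v$, which inherits the boundary conditions of $v$ and satisfies $w(0,\cdot)=w(T,\cdot)=0$ because $\theta(t) \to +\infty$ at $t=0,T$. A direct computation writes the transformed equation as $P^{+} w + P^{-} w = e^{s\varphi} g$ with $P^{+}$ formally self-adjoint and $P^{-}$ formally skew-adjoint in $L^{2}(Q)$. Squaring and integrating yields
\[
\|P^{+} w\|_{L^{2}(Q)}^{2} + \|P^{-} w\|_{L^{2}(Q)}^{2} + 2\langle P^{+} w, P^{-} w\rangle_{L^{2}(Q)} = \int\!\!\!\!\int_{Q} e^{2s\varphi} g^{2}\, dx\, dt.
\]
Expanding the cross term by integration by parts in both $t$ and $x$ and using $\psi_{x} = \gamma x / a(x)$ (so that $a\psi_{x}^{2} = \gamma^{2} x^{2}/a$) together with \eqref{bound_theta}, the leading distributed contribution is the positive quadratic form
\[
C\int\!\!\!\!\int_{Q} \Bigl( s\theta\, a(x)\, w_{x}^{2} + s^{3}\theta^{3}\frac{x^{2}}{a(x)} w^{2}\Bigr) dx\, dt.
\]
Lower-order remainders involving $\theta'$ and $(a\psi_{x})_{xx}$ are dominated by the cubic-in-$s\theta$ term once $s$ is large enough, and the Hardy--Poincar\'e inequality \eqref{harpoideginequality} is invoked to reabsorb a spurious $\int\!\!\int a w^{2}/x^{2}$ contribution into $\int\!\!\int s\theta a w_{x}^{2}$.

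The boundary terms from the integration by parts are treated as follows. The time boundaries at $t=0,T$ vanish because $e^{2s\varphi}$ does. The spatial boundary at $x=0$ vanishes thanks to the boundary conditions for $v$ ($v(t,0)=0$ in the (WD) case, $(av_{x})(t,0)=0$ in the (SD) case) combined with the decay of $a(x)\psi_{x}(x) = \gamma x$ at $0$; this is exactly where the refined structural assumptions on $a/x^{\beta}$ in \eqref{hyp_WD_left}--\eqref{hyp_SD_left} are used. The boundary term at $x=1$ has the wrong sign, so I would apply the estimate to $\chi v$ instead of $v$, where $\chi \in C^{\infty}([0,1])$ equals $1$ near $0$ and $0$ near $1$, with derivative supported in a subinterval of $\omega$; this generates extra source terms localized in $\omega$. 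On the complementary interval, where $a$ is nondegenerate, I would apply to $(1-\chi)v$ the classical Fursikov--Imanuvilov Carleman estimate with weight $\eta$, whose observation term is again supported in $\omega$.

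Summing the two estimates and returning to $v$ via $v = e^{-s\varphi} w$ bounds the left-hand side of \eqref{Carl_estimate-1} by $\int\!\!\int_{Q} g^{2} e^{2s\eta}$ (using $\varphi \le \eta$) plus observations on $Q_{\omega}$ involving both $v^{2}$ and $v_{x}^{2}$. A Caccioppoli-type inequality, obtained by testing the equation for $v$ against $\zeta^{2} e^{2s\eta} v$ for a cutoff $\zeta$ supported in $\omega$ and equal to $1$ on the observation region, absorbs the $v_{x}^{2}$ observation into $\int\!\!\int_{Q_{\omega}} s^{3}\theta^{3} v^{2} e^{2s\eta}$ up to a source contribution. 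I expect the main obstacle to be the vanishing of the boundary term at $x=0$ in the strongly degenerate case: there $w_{x}$ itself may blow up at $0$, and controlling the trace $\lim_{x \to 0^{+}} a(x)\psi_{x}(x) w_{x}^{2}(t,x)$ requires the monotonicity of $x \mapsto a(x)/x^{\beta}$ in \eqref{hyp_SD_left} combined with a careful use of the Hardy--Poincar\'e inequality, and this is the technical heart of the argument.
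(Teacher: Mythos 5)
The paper does not actually prove Theorem \ref{thm-carleman-1}; it simply cites \cite[Theorem 3.3]{AHMS19} and \cite[Lemma 2.4]{Carmelo2000}, and your proposal reconstructs essentially the standard argument underlying those references (conjugation by $e^{s\varphi}$, splitting into self-adjoint and skew-adjoint parts, a spatial cutoff separating the degenerate region from a nondegenerate one treated by the Fursikov--Imanuvilov estimate with weight $\eta$, Hardy--Poincar\'e to absorb the spurious terms, and a Caccioppoli inequality to remove the $v_x^2$ observation). This is also exactly the decomposition scheme the paper itself carries out explicitly in the proof of Theorem \ref{thm-carleman-2} via \cite[Propositions 3.4 and 3.5]{Fadili}, so your route matches the intended one.
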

\begin{proof}
For the proof see \cite[Theorem 3.3]{AHMS19}, where this inequality is established for a coupled parabolic system in $(t_0, T) \times (0, 1)$ instead of $Q$. However, this inequality remains true in $(0, T) \times (0, 1)$ with suitable changes. See also \cite[Lemma 2.4]{Carmelo2000}.
\end{proof}

 Theorem \ref{thm-carleman-1} could be used  to prove null controllability for  \eqref{sys-memory1} under the following hypothesis on the memory kernel  $b$:
\begin{equation}\label{hyp-kernel-k0}
e^{\frac{C^{\ast}}{(T-t)^4}} b \in L^{\infty}((0, T) \times Q)
\end{equation}
for some constant $C^{\ast}>0$. However, we emphasize that, our objective is to provide null controllability for the memory equation \eqref{sys-memory1} for more general memory kernel  $b$. In this purpose, as a first step, we are going to extend the Carleman inequality proved in the previous Theorem in the following way.

\begin{theorem}\label{thm-carleman-2}
Let $k\geq 0$. Then, there exist two positive constants $C$ and $s_0$, such that the solution  $v \in Z_T$ of \eqref{sys-adj-nonhom} satisfies, 
\begin{align}\label{Carl_estimate-2}
\int\!\!\!\!\!\int_{Q} \Big((s\theta)^{1+k}  & a(x) v_{x}^{2} + (s \theta)^{(3+k)} \frac{x ^2}{ a(x)} v^{2}\Big) e^{2s\varphi}\,dx\,dt \notag \\
&\leq C\Big(\int\!\!\!\!\!\int_{Q} (s \theta)^k g^2 e^{2s\eta}\,dx\,dt  +  \int\!\!\!\!\!\int_{Q_{\omega}} (s\theta)^{k+3} v^2 e^{2s\eta} \,dx dt\Big)
\end{align}
for all $s \geq s_0$.
\end{theorem}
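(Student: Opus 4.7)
The plan is to derive \eqref{Carl_estimate-2} from Theorem \ref{thm-carleman-1} by the multiplicative substitution $w := (s\theta)^{k/2} v$. Since $(s\theta)^{k/2}$ depends only on $t$, $w$ inherits the homogeneous boundary and terminal conditions of $v$, and a direct computation yields
\begin{equation*}
-w_t - (a w_x)_x = \tilde g, \qquad \tilde g := (s\theta)^{k/2}\, g - \frac{k}{2}\,\frac{\theta'(t)}{\theta(t)}\,w.
\end{equation*}
The function $w$ need not lie in $Z_T$ since $\theta$ blows up at $t=0,T$; I would remedy this with a standard time cutoff $\chi_\varepsilon \in C_c^\infty((0,T))$ equal to $1$ on $[\varepsilon, T-\varepsilon]$, apply Theorem \ref{thm-carleman-1} to $\chi_\varepsilon w$, and let $\varepsilon \to 0^+$. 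The exponential decay of $e^{2s\varphi}$ near $t=0,T$ ensures all limiting integrals are finite.

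Once the estimate of Theorem \ref{thm-carleman-1} is written for $w$, the relations $w_x = (s\theta)^{k/2} v_x$ and $w^2 = (s\theta)^k v^2$ make its left-hand side coincide exactly with the left-hand side of \eqref{Carl_estimate-2}. On the right-hand side I would expand
\begin{equation*}
|\tilde g|^2 \leq 2(s\theta)^k g^2 + \frac{k^2}{2}\Big(\frac{\theta'}{\theta}\Big)^2 (s\theta)^k v^2,
\end{equation*}
and use $(\theta'/\theta)^2 \leq C\theta$, which follows from \eqref{bound_theta}. This produces the target source term $C\!\int\!\!\!\!\!\int_Q (s\theta)^k g^2 e^{2s\eta}\,dx\,dt$, the target observation term $C\!\int\!\!\!\!\!\int_{Q_\omega}(s\theta)^{k+3} v^2 e^{2s\eta}\,dx\,dt$, and one surplus piece
\begin{equation*}
Ck^2 \int\!\!\!\!\!\int_Q s^k \theta^{k+1}\, v^2\, e^{2s\eta}\,dx\,dt.
\end{equation*}
The whole argument then reduces to absorbing this surplus term into the left-hand side.

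The main obstacle is precisely this absorption. Since $s^k\theta^{k+1} = (s\theta)^{k+3}/(s^3\theta^2)$ and $\theta$ is bounded below on $[0,T]$, a factor $1/s^3$ is gained, so for $s_0$ large the part of the surplus integral taken over $Q_\omega$ is directly absorbed into the observation term already present on the right-hand side. On $Q\setminus Q_\omega$ one must contend simultaneously with the degeneracy of $x^2/a$ near $x=0$ and with the weight mismatch between the right-hand-side weight $e^{2s\eta}$ and the left-hand-side weight $e^{2s\varphi}$; I would handle this by applying the Hardy--Poincar\'e inequality of Proposition \ref{prop_hardy} to the weighted function $e^{s\varphi}v$, whose commutator with the Carleman weight is precisely of the form $\gamma^2 s^2 \theta^2 (x^2/a)$, controllable by the dominant LHS integral $\int\!\!\int (s\theta)^{k+3}(x^2/a) v^2 e^{2s\varphi}$, the remaining $a v_x^2$ piece being swallowed by $\int\!\!\int (s\theta)^{k+1} a v_x^2 e^{2s\varphi}$. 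The residual exponential mismatch $e^{2s(\eta-\varphi)} = e^{2s\theta(\Psi-\psi)}$ is accommodated by a further $x$-splitting together with the smallness gained from the factor $s^3\theta^2$. Choosing $s_0 = s_0(k,T)$ large enough, depending also on the Hardy--Poincar\'e constant $C_{HP}$ and on $\|\Psi-\psi\|_\infty$, then yields \eqref{Carl_estimate-2}.
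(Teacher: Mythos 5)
Your reduction to Theorem \ref{thm-carleman-1} via $w=(s\theta)^{k/2}v$ is not the paper's route (the paper localizes in space with a cutoff $\chi$ near $x=0$ and $\zeta=1-\chi$ away from it, and invokes the already $k$-weighted Carleman estimates of \cite[Propositions 3.4 and 3.5]{Fadili} for the degenerate and nondegenerate pieces separately, so no commutator term ever has to be absorbed), and unfortunately your route has a gap that I do not see how to close. The obstruction is exactly where you locate it, but it is worse than a technical nuisance: the surplus term
\begin{equation*}
Ck^2\int\!\!\!\!\!\int_{Q} s^k\theta^{k+1}\,v^2\,e^{2s\eta}\,dx\,dt
\end{equation*}
carries the weight $e^{2s\eta}$, whereas every term on the left-hand side carries $e^{2s\varphi}$, and by construction $\eta-\varphi=\theta(\Psi-\psi)$ with $\Psi-\psi$ bounded \emph{below} by a positive constant uniformly in $x$: indeed $\max_x\psi=\gamma(d^\ast-d)<-e^{2\lambda\|\rho\|_\infty}$ by \eqref{condition_d_gamma}, while $\min_x\Psi=1-e^{2\lambda\|\rho\|_\infty}$, so $\Psi-\psi\geq 1$ on $[0,1]$. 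Hence $e^{2s(\eta-\varphi)}\geq e^{2s\theta(T/2)}=e^{2s(2/T)^8}$ grows exponentially in $s$ at every point of $Q$, and the only smallness available from comparing $s^k\theta^{k+1}$ with $(s\theta)^{k+3}x^2/a(x)$ (even after the Hardy--Poincar\'e step, which correctly converts $\int v^2e^{2s\varphi}\,dx$ into $\int(av_x^2+s^2\theta^2\frac{x^2}{a}v^2)e^{2s\varphi}\,dx$) is the polynomial factor $s^{-1}$. No $x$-splitting helps, because the lower bound $\Psi-\psi\geq1$ holds at every $x$; and taking $s_0$ larger makes the mismatch strictly worse, not better, so the final sentence of your argument points in the wrong direction. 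Your scheme would work verbatim if the source term in \eqref{Carl_estimate-1} carried the weight $e^{2s\varphi}$ instead of $e^{2s\eta}$, but that stronger form of Theorem \ref{thm-carleman-1} is not what is stated (nor what the spatial-cutoff proof naturally yields, since the nondegenerate piece contributes $e^{2s\eta}$). To repair the proof along the paper's lines you should reprove the weighted estimate directly with the factor $(s\theta)^k$ inserted into the pointwise Carleman computation on each spatial piece, which is precisely what the cited Propositions 3.4 and 3.5 of \cite{Fadili} provide.
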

\begin{proof}
Let $\omega_2 = (x_1, x_2)$ and $\omega_1$ be two arbitrary subintervals of $\omega$ such that  $\omega_2 \Subset \omega_1$
and consider a smooth cut-off function $\chi \in C^\infty([0, 1])$ such that 

\begin{align*}
0\leq \chi \leq 1, \qquad \chi(x):=
\left\{
\begin{array}{lll}
1,   & \text{for} \; x\in[0,x_1],\\
0 , & \text{for} \; x\in [x_2, 1].
\end{array}
\right. 
\end{align*}
Then, thanks to \cite[Proposition 3.4]{Fadili}, the solution of \eqref{sys-adj-nonhom} satisfies
\begin{align}\label{Carl_estimate_deg_chi}
\int\!\!\!\!\!\int_{Q}& \Big((s\theta)^{1+k} \chi^2  a(x) v_{x}^{2} + (s \theta)^{(3+k)} \chi^2 \frac{x^2}{ a(x)} v^{2}\Big) e^{2s\varphi}\,dx\,dt \notag \\
&\leq C\Big(\int\!\!\!\!\!\int_{Q} (s \theta)^k \chi^2 g^2 e^{2s\varphi}\,dx\,dt+  \int\!\!\!\!\!\int_{Q_{\omega_1}} (s\theta)^{k} \Big(f^2+ (s\theta)^{2} v^2 \Big) e^{2s\varphi} \,dx dt\Big).
\end{align} 
On the other hand, let $\zeta := 1 - \chi$, it follows from \cite[Proposition 3.5]{Fadili} that  
\begin{align}\label{Carl_estimate_nondeg_zeta}
\int\!\!\!\!\!\int_{Q} \Big((s\theta)^{1+k} \zeta^2 & a(x) v_{x}^{2} + (s \theta)^{(3+k)} \zeta^2 \frac{x^2}{ a(x)} v^{2}\Big) e^{2s\eta}\,dx\,dt \notag \\
&\leq C\Big(\int\!\!\!\!\!\int_{Q} (s \theta)^k \zeta^2 g^2 e^{2s\eta}\,dx\,dt+  \int\!\!\!\!\!\int_{Q_{\omega_1}} (s\theta)^{k+3} v^2 e^{2s\eta} \,dx dt\Big).
\end{align} 
Therefore, using \eqref{compar_varphi_eta}, \eqref{Carl_estimate_deg_chi}, \eqref{Carl_estimate_nondeg_zeta} and the fact that $\frac{1}{2} \leq \chi^2 +\zeta^2 \leq 1$ there holds
\begin{align*}
\int\!\!\!\!\!\int_{Q} \Big((s\theta)^{1+k}  & a(x) v_{x}^{2} + (s \theta)^{(3+k)} \frac{x ^2}{ a(x)} v^{2}\Big) e^{2s\varphi}\,dx\,dt \notag \\
&\leq C\Big(\int\!\!\!\!\!\int_{Q} (s \theta)^k g^2 e^{2s\eta}\,dx\,dt  +  \int\!\!\!\!\!\int_{Q_{\omega}} (s\theta)^{k+3} v^2 e^{2s\eta} \,dx dt\Big)
\end{align*}
which concludes Theorem \ref{thm-carleman-2}. 
\end{proof}
Next, by \eqref{Carl_estimate-2}, we are going to derive a new modified Carleman inequality, that is an estimate with a weight time function exploding only at the final time $t=T$. This choice is done recalling the technique developed by A.V. Fursikov and O.Y. Imanuvilov in \cite{FI1996} in the context of uniformly parabolic equations. In our setting, this new weight allows us to derive a null controllability result for system \eqref{sys-memory1}  imposing a restriction on the kernel $b$ only at the final time $t=T$ (see  \eqref{hypoth_kernel}). To this end, let us introduce the following weight functions:
\begin{equation*}
\beta(t):=
\left\{
\begin{array}{lll}
\theta(\frac{T}{2}) = \big(\frac{2}{T}\big)^8,   & \text{for} \; t\in\big[0,\frac{T}{2}\big],\\
\theta(t) , & \text{for} \; t\in \big[\frac{T}{2}, T\big],
\end{array}
\right. \Phi(t,x) =  \beta(t)\psi(x),\quad \sigma(t,x): = \beta(t)\Psi(x)
\end{equation*}
and
\begin{equation}\label{modif_weight_funct}
\begin{aligned}
& \widehat{\Phi}(t) : = \max\limits_{x\in[0,1]} \Phi(t,x)= \gamma (d^{\ast} -d) \beta(t), \\
& {\Phi^*(t)} :=  \min\limits_{x\in[0,1]} \Phi(t,x)= -\gamma d \beta(t).
\end{aligned}
\end{equation}
In view of \eqref{compar_varphi_eta}, we can see that the weight functions $\Phi$ and $\sigma$  satisfy the following inequality which is needed in what follows
\begin{equation}\label{compar_modif_weight}
 \Phi(t,x) \leq \sigma(t,x), \quad \forall (t,x)  \, \in Q.
\end{equation} 

Now, we are ready to state the following modified Carleman estimate, which reveals to be a major tool to obtain the null controllability result given in Theorem \ref{Thm_null_Control_memo_2}.
 \begin{lemma}\label{Lemma_modif_Carl_estimate}
Let $k\geq 0$. Then, there exists two positive constants $C$ and $s_0$ such that every solution $ v \in Z_T$ of system \eqref{sys-adj-nonhom} satisfies
\begin{align}\label{modif_Carl_estimate}
&s^k e^{2s\widehat{\Phi}(0)}\| v(0)\|_{L^2(0,1)}^2 + \int\!\!\!\!\!\int_{Q} (s \beta)^{k} v^2  e^{2s\Phi}\,dx\,dt \notag \\
&\qquad \leq C e^{2s[\widehat{\Phi}(0) - \Phi^*(\frac{5T}{8})]} \Big(\int\!\!\!\!\!\int_{Q} (s\beta)^{k} g^2 e^{2s\sigma}\,dx\,dt +  \int\!\!\!\!\!\int_{Q_{\omega}} (s\beta)^{k+3} v^2 e^{2s\sigma} \,dx dt \Big)
\end{align}
for all $s \geq s_0$. 
\end{lemma}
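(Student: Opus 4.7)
The strategy is the classical Fursikov--Imanuvilov trick: start from the Carleman inequality \eqref{Carl_estimate-2}, whose weight $\theta$ blows up at both $t = 0$ and $t = T$, and upgrade it to one with a modified time weight $\beta$ which is constant on $[0, T/2]$ and coincides with $\theta$ on $[T/2, T]$; the slack on the left half is produced by a backward-in-time parabolic dissipation estimate. As preliminaries I would record the elementary weight comparisons: since $\theta \geq \beta$ on $[0, T]$ and $\psi, \Psi < 0$, one has $\varphi \leq \Phi$ and $\eta \leq \sigma$ on $Q$, with equality on $[T/2, T]$; moreover, on $[0, T/2]$ the ratio $(s\theta)^{k} e^{2s\eta}/[(s\beta)^{k} e^{2s\sigma}] = (\theta/\beta)^{k} e^{2s(\theta-\beta)\Psi}$ is uniformly bounded for $s$ large, because the exponential decay (owing to $\Psi < 0$ and $\theta \geq \beta$) dominates the polynomial growth. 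Hence the RHS of \eqref{Carl_estimate-2} can be replaced by the RHS of the target estimate at the cost of a harmless multiplicative constant.

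Next, on the auxiliary subinterval $I := [T/2, 5T/8]$ both $\theta$ and $\varphi$ are bounded between positive constants, and $\varphi \geq \Phi^*(\frac{5T}{8})$. Combining the Hardy--Poincar\'e inequality of Proposition \ref{prop_hardy} with the observation that $a(x)/x^2$ is bounded below by some $c_0 > 0$ on $(0, 1]$ (it is continuous there and blows up at $0$), one obtains $\|v(t)\|_{L^2(0,1)}^2 \leq C\int_0^1 a\,v_x^2\,dx$ for a.e.\ $t \in (0, T)$. Plugging this into \eqref{Carl_estimate-2} and integrating over $I$ yields
\[
\int_I \|v(t)\|_{L^2(0,1)}^2\,dt \;\leq\; C\, s^{-(1+k)}\, e^{-2s\Phi^*(\frac{5T}{8})}\,\mathcal{R},
\]
where $\mathcal{R}$ abbreviates the RHS of \eqref{modif_Carl_estimate} stripped of its exponential prefactor. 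I would then propagate backward in time on $[0, T/2]$: testing $-v_t - (a v_x)_x = g$ against $v$ and applying Young together with Gr\"onwall gives $\|v(t)\|_{L^2}^2 \leq C\bigl(\|v(t_0)\|_{L^2}^2 + \int_0^{t_0}\|g(\tau,\cdot)\|_{L^2}^2\,d\tau\bigr)$ for all $0 \leq t \leq t_0 \leq T$; averaging $t_0$ over $I$ and combining with the previous bound controls $\sup_{[0, T/2]}\|v(t)\|_{L^2}^2$ and a fortiori $\|v(0)\|_{L^2}^2$ by $C\, s^{-(1+k)}\, e^{-2s\Phi^*(\frac{5T}{8})}\,\mathcal{R}$. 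The stray $\|g\|_{L^2}^2$ contribution is absorbed into $\mathcal{R}$ because the choice of $\gamma$ in \eqref{condition_d_gamma} forces $-\Phi^*(\frac{5T}{8})$ to be strictly larger than $-\min_{[0,5T/8]\times(0,1)}\sigma$.

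Finally I would assemble the LHS of \eqref{modif_Carl_estimate} from its two halves. On $[T/2, T]$, where $\Phi = \varphi$ and $\beta = \theta$, the contribution is handled directly by \eqref{Carl_estimate-2} together with the Hardy--Poincar\'e passage from $a v_x^2$ to $v^2$ used above (applied to $w := v\,e^{s\varphi}$, whose $av_x^2$-estimate absorbs the cross term into the $(s\theta)^{3+k}\frac{x^2}{a} v^2$-term via $\varphi_x = \gamma\theta x/a$). On $[0, T/2]$, where $e^{2s\Phi(t,x)} \leq e^{2s\widehat{\Phi}(0)}$ and $(s\beta)^k$ is $t$-constant, the contribution is bounded by $C\, s^k e^{2s\widehat{\Phi}(0)} \sup_{[0,T/2]}\|v(t)\|_{L^2}^2$; inserting the backward-energy bound produces precisely the prefactor $e^{2s[\widehat{\Phi}(0) - \Phi^*(\frac{5T}{8})]}$, and the $s^k e^{2s\widehat{\Phi}(0)}\|v(0)\|_{L^2}^2$ term is handled identically. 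I expect the main technical obstacle to be the bookkeeping in the weighted Hardy--Poincar\'e step on $[T/2, T]$ — where the $x$-dependent weight $e^{2s\varphi(t,x)}$ forbids a naive transfer from $av_x^2$ to $v^2$, so one must work with $w = v e^{s\varphi}$ and carefully absorb the cross term into the $\frac{x^2}{a}v^2$-term of \eqref{Carl_estimate-2} — combined with the polynomial-vs-exponential comparison $(\theta/\beta)^k e^{2s(\theta-\beta)\Psi} \leq C$ on $[0, T/2]$ for $s$ large enough.
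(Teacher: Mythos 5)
Your proposal is correct and follows essentially the same route as the paper: the Fursikov--Imanuvilov modification, in which the Carleman estimate \eqref{Carl_estimate-2} on $[T/2,5T/8]$ (converted from the $a v_x^2$, $\frac{x^2}{a}v^2$ weights to plain $v^2$ via Hardy--Poincar\'e applied to $v e^{s\varphi}$) feeds a backward parabolic energy estimate that controls $v$ on $[0,T/2]$ and $v(0)$, together with the comparison $(s\theta)^k e^{2s\eta}\le C(s\beta)^k e^{2s\sigma}$ for large $s$. The only differences are cosmetic: you use Gr\"onwall plus averaging of the intermediate time over $[T/2,5T/8]$ where the paper uses a temporal cut-off $\xi$ and the well-posedness estimate for $w=\beta^{k/2}\xi e^{s\widehat{\Phi}(0)}v$, and you invoke the (valid, under \eqref{hyp_WD_left}--\eqref{hyp_SD_left}) lower bound $a(x)/x^2\ge c_0$ where the paper uses a Young-inequality splitting with the weight $p=x^{4/3}a^{1/3}$.
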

\begin{proof}
Let $ \xi \in C^{\infty}([0, T])$ be a cut-off function such that
\begin{align}\label{Xi}
0\leq \xi \leq 1, \qquad \xi(t):=
\left\{
\begin{array}{lll}
1,   & \text{for} \; t\in\Big[0,\dfrac{T}{2}\Big],\\
0 , & \text{for} \; t\in \Big[\dfrac{5T}{8}, T\Big]
\end{array}
\right. 
\end{align}
and define $ w = \tilde{\xi} v$ ,  where $ \tilde{\xi} = \beta^{\frac{k}{2}} \xi  e^{s\widehat{\Phi}(0)} $ and $v$ solves \eqref{sys-adj-nonhom}.

Hence $w$ satisfies
\begin{equation}\label{sys-adj-nonhom-w}
\left\{
\begin{array}{lll}
\displaystyle - w_t - (a(x)  w_x )_x = - \tilde{\xi}' v + \tilde{\xi} g,  &  & (t, x) \in Q, \\
w(t,1)= 0 , & & t \in (0, T),  \\
\begin{cases}
& w(t, 0) = 0,   \qquad   \quad (WD), \\
& (a w_x )  (t, 0)= 0 , \quad (SD), \\
\end{cases}  & & t \in (0, T),\\
w(T,x)= 0,  & & x \in  (0, 1).
\end{array}
\right.
\end{equation}
Then, by \eqref{energy-nonhom1} applied to the above system, one can see that
\begin{equation}\label{energy-w}
\| w(0)\|_{L^2(0,1)}^2 +\| w\|_{L^2(Q)}^2  \leq C \int\!\!\!\!\!\int_{Q} (- \tilde{\xi}' v + \tilde{\xi} g)^2  \,dx\,dt
\end{equation}
for some constants $C>0$.

We estimate from below the two terms on the left hand side of \eqref{energy-w} in the following way:
\begin{equation}\label{left-enrgy-1}
\| w(0)\|_{L^2(0,1)}^2 =   \| \beta^{\frac{k}{2}}(0) \xi(0) e^{s\widehat{\Phi}(0)} v(0)\|_{L^2(0,1)}^2 = \Big(\dfrac{2}{T}\Big)^{8k} \| e^{s\widehat{\Phi}(0)} v(0)\|_{L^2(0,1)}^2 
\end{equation}
and
\begin{equation}\label{left-enrgy-w-2}
\| w\|_{L^2(Q)}^2  =   \int_{0}^{\frac{5T}{8}}\!\!\!\!\int_0^1 \beta^{k}\xi^2  e^{2s\widehat{\Phi}(0)} v^2 \,dx\,dt \geq \int_{0}^{\frac{T}{2}}\!\!\!\!\int_0^1 \beta^{k} v^2 e^{2s\Phi}  \,dx\,dt 
\end{equation}
since $ \Phi \leq \widehat{\Phi}(0) \; \text{in} \, Q$.

Concerning the right hand side of \eqref{energy-w}, we have
\begin{align}\label{right-enrgy-w}
&\int\!\!\!\!\!\int_{Q} (- \tilde{\xi}' v + \tilde{\xi} g)^2  \,dx\,dt \notag\\ 
& =  \int\!\!\!\!\!\int_{Q}\Big[ \Big(- \frac{k}{2} \beta' \beta^{\frac{k}{2}-1} \xi - \beta^{\frac{k}{2}} \xi' \Big) e^{s\widehat{\Phi}(0)} v + \beta^{\frac{k}{2}} \xi e^{s\widehat{\Phi}(0)} g\Big]^2  \,dx\,dt \notag \\
& \leq C \bigg(\int\!\!\!\!\!\int_{Q}  (\beta')^2 \beta^{k-2}  \xi^2 e^{2s\widehat{\Phi}(0)} v^2   \,dx\,dt \notag\\
& \quad + \int\!\!\!\!\!\int_{Q}  \beta^{k} (\xi')^2  e^{2s\widehat{\Phi}(0)} v^2  \,dx\,dt + \int\!\!\!\!\!\int_{Q}  \beta^k \xi^2 e^{2s\widehat{\Phi}(0)} g^2 \,dx\,dt \bigg).
\end{align}
Observing that $\beta'= 0 \; \text{in} \; [0, T/2]$, $\beta= \theta \; \text{in} \; [T/2, T]$ and using \eqref{bound_theta}, the fact that $ \supp \xi \subset [0, 5T/8]$ and $ \supp \xi' \subset [T/2, 5T/8]$, it follows that
\begin{equation}\label{right-enrgy-w-1}
e^{2s\widehat{\Phi}(0)} \int\!\!\!\!\!\int_{Q} (\beta')^2 \beta^{k-2}  \xi^2 v^2   \,dx\,dt \leq C e^{2s\widehat{\Phi}(0)} \int_{\frac{T}{2}}^{\frac{5T}{8}}\!\!\!\!\int_0^1 \beta^{k+1}   v^2   \,dx\,dt
\end{equation}
and 
\begin{align}\label{right-enrgy-w-2}
e^{2s\widehat{\Phi}(0)} \int\!\!\!\!\!\int_{Q}  \beta^{k} (\xi')^2 v^2  \,dx\,dt
& \leq C e^{2s\widehat{\Phi}(0)} \int_{\frac{T}{2}}^{\frac{5T}{8}}\!\!\!\!\int_0^1 \beta^{k}   v^2   \,dx\,dt\notag\\ 
& \leq C  e^{2s\widehat{\Phi}(0)} \int_{\frac{T}{2}}^{\frac{5T}{8}}\!\!\!\!\int_0^1 \beta^{k+1}   v^2   \,dx\,dt.
\end{align} 
Hence, by the estimates \eqref{energy-w}-\eqref{right-enrgy-w-2}, we find that
\begin{align}\label{estim-obs-v-1}
s^k \|e^{s\widehat{\Phi}(0)}  v(0) &\|_{L^2(0,1)}^2  + \int_{0}^{\frac{T}{2}}\!\!\!\!\int_0^1 (s\beta)^{k} v^2 e^{2s\Phi}  \,dx\,dt \notag \\
& \leq C \Big(s^k \int_{\frac{T}{2}}^{\frac{5T}{8}}\!\!\!\!\int_0^1 s^k \beta^{k+1}   v^2  e^{2s\widehat{\Phi}(0)}  \,dx\,dt + \int_{0}^{\frac{5T}{8}}\!\!\!\!\int_0^1  (s\beta)^k e^{2s\widehat{\Phi}(0)} g^2 \,dx\,dt \Big).
\end{align}
Now, let us deal with the first term in the right-hand side of \eqref{estim-obs-v-1}.

First, using  the fact that $\beta =\theta$ and $\Phi= \varphi \; \text{in} \; [T/2, T]$, one has 
\begin{align}\label{equality_estim_Phi_varphi}
\int_{\frac{T}{2}}^{\frac{5T}{8}}\!\!\!\!\int_0^1 s^k \beta^{k+1} v^2  e^{2s\Phi}\,dx\,dt & =  \int_{\frac{T}{2}}^{\frac{5T}{8}}\!\!\!\!\int_0^1 s^k \theta^{k+1} v^2  e^{2s\varphi}\,dx\,dt.
\end{align}
Then, applying Young's inequality as in \cite{Alabau2006}, we see that
\begin{align}\label{Young_vexp}
\int_0^1 v^2  e^{2s\varphi}\,dx& = \int_0^1  \bigg(\Big(\frac{a(x)}{x^2}\Big)^{\frac{1}{3}} v^2 e^{2s\varphi}\bigg)^{\frac{3}{4}}
\bigg(\frac{x^2}{a(x)}v^2 e^{2s\varphi} \bigg)^{\frac{1}{4}}\,dx\nonumber \\
&\leq \frac{3}{4} \int_0^1 \Big(\frac{a(x)}{x^2} \Big)^{\frac{1}{3}} v^2  e^{2s\varphi}\,dx+
\frac{1}{4} \int_0^1  \frac{x^2}{a(x)} v^2 e^{2s\varphi}\,dx.
\end{align}
Let $p(x)=x^{4/3}a^{1/3}$, then since the function $x \mapsto \displaystyle \frac{x^2}{a}$ is nondecreasing on $(0,1)$ one has,
$$p(x)= a \Big(\frac{x^2}{a}\Big)^{\frac{2}{3}}\leq C a(x).$$
Then, applying the Hardy-Poincar\'e inequality \eqref{harpoideginequality} to $v e^{s\varphi} $, we get
\begin{align}\label{hardy-vexp}
\int_{0}^{1} \frac{a^{1/3}}{x^{2/3}} (v e^{s\varphi})^2\,dx & = \int_{0}^{1} \frac{p(x)}{x^{2}} (v e^{s\varphi})^2\,dx \notag \\ 
&\leq C \int_{0}^{1} p(x) (v e^{s\varphi})_x^2\,dx
\leq C \int_{0}^{1} a(x) (v e^{s\varphi})_x^2\,dx.
\end{align}
Using the definition of $\varphi$ (see \eqref{weightfunc_deg}), it follows that
\begin{align}\label{est_first_term_young}
\int_{0}^{1} \frac{a^{1/3}}{x^{2/3}} (v e^{s\varphi})^2\,dx &\leq C \int_{0}^{1} a(x) \big(v_x + s \varphi_x v)^2 e^{2s\varphi}\,dx \notag\\
& \leq C \int_{0}^{1}  \Big(a(x) v_x^2 + s^2 \theta^2 \frac{x^2}{a(x)} v^2 \Big) e^{2s\varphi}\,dx
.
\end{align}
By \eqref{Young_vexp} and \eqref{est_first_term_young}, we obtain 
\begin{align}\label{esti-vexp1}
\int_0^1 v^2  e^{2s\varphi}\,dx & \leq C \int_{0}^{1}  \Big(a(x) v_x^2 + s^2 \theta^2 \frac{x^2}{a(x)} v^2 \Big) e^{2s\varphi}\,dx.
\end{align}
Hence, from \eqref{equality_estim_Phi_varphi} and \eqref{esti-vexp1}, we get that
\begin{align*}
\int_{\frac{T}{2}}^{\frac{5T}{8}}\!\!\!\!\int_0^1 & s^k \beta^{k+1} v^2  e^{2s\Phi}\,dx\,dt \notag\\
& \leq C \int_{\frac{T}{2}}^{\frac{5T}{8}}\!\!\!\!\int_0^1 s^k \theta^{k+1} \Big(a(x) v_x^2 + s^2 \theta^{2} \frac{x^2}{a(x)} v^2 \Big) e^{2s\varphi}\,dxdt.
\end{align*}
Thus, applying Carleman inequality \eqref{Carl_estimate-2}, one has
\begin{align}\label{est-right-obs-1}
s^k \int_{\frac{T}{2}}^{\frac{5T}{8}} \!\!\!\!\int_0^1 \beta^{k+1} v^2  e^{2s\Phi}\,dx\,dt & \leq C \int_{\frac{T}{2}}^{\frac{5T}{8}}\!\!\!\!\int_0^1  \Big(s^k \theta^{k+1}a(x) v_x^2 + s^{k+2} \theta^{k+3} \frac{x^2}{a(x)} v^2 \Big) e^{2s\varphi}\,dxdt \notag\\
& \leq C \int\!\!\!\!\!\int_{Q}  \Big((s\theta)^{k+1}a(x) v_x^2 + (s\theta)^{k+3} \frac{x^2}{a(x)} v^2 \Big) e^{2s\varphi}\,dxdt\notag\\
&\leq C\Big(\int\!\!\!\!\!\int_{Q} (s \theta)^k g^2 e^{2s\eta}\,dx\,dt  +  \int\!\!\!\!\!\int_{Q_{\omega}} (s\theta)^{k+3} v^2 e^{2s\eta} \,dx dt\Big).
\end{align}
Now observe that
\begin{equation}\label{inequality_Phi}
\Phi^*\big(\frac{5T}{8}\big)\leq \Phi, \quad \text{in} \; \Big[0, \frac{5T}{8}\Big]\times \big[0,1\big].
\end{equation}
Therefore
\begin{align}\label{estim-obs-v-1_right1}
 &s^k\int_{\frac{T}{2}}^{\frac{5T}{8}}\!\!\!\!\int_0^1   \beta^{k+1} v^2  e^{2s\widehat{\Phi}(0)} \,dx\,dt \notag = s^k\int_{\frac{T}{2}}^{\frac{5T}{8}}\!\!\!\!\int_0^1  \beta^{k+1} v^2 e^{2s [\widehat{\Phi}(0)-\Phi]} e^{2s\Phi}\,dx\,dt\notag\\
& \leq e^{2s [\widehat{\Phi}(0)-\Phi^*(\frac{5T}{8})]} s^k\int_{\frac{T}{2}}^{\frac{5T}{8}}\!\!\!\!\int_0^1  \beta^{k+1} v^2  e^{2s\Phi}\,dx\,dt\notag\\
&\leq Ce^{2s [\widehat{\Phi}(0)-\Phi^*(\frac{5T}{8})]} \Big(\int\!\!\!\!\!\int_{Q} (s \theta)^k g^2 e^{2s\eta}\,dx\,dt  + \int\!\!\!\!\!\int_{Q_{\omega}} (s\theta)^{k+3} v^2 e^{2s\eta} \,dx dt\Big)
\end{align}
for $s$ large enough.

Moreover, in view of \eqref{compar_modif_weight} and \eqref{inequality_Phi}, the second term in the right hand side of \eqref{estim-obs-v-1} reads as
\begin{align*}
\int_{0}^{\frac{5T}{8}}\!\!\!\!\int_0^1  (s\beta)^k e^{2s\widehat{\Phi}(0)} g^2 \,dx\,dt =  \int_{0}^{\frac{5T}{8}}\!\!\!\!\int_0^1  (s\beta)^k  e^{2s [\widehat{\Phi}(0)-\Phi]} e^{2s\Phi} g^2 \,dx\,dt\notag\\
\leq e^{2s[\widehat{\Phi}(0)-\Phi^*(\frac{5T}{8})]} \int_{0}^{\frac{5T}{8}}\!\!\!\!\int_0^1  (s\beta)^k  e^{2s\sigma} g^2 \,dx\,dt.
\end{align*}
Combining this last inequality with \eqref{estim-obs-v-1} and \eqref{estim-obs-v-1_right1}, it follows that
\begin{align}\label{estim-obs-v-2}
& s^k \|e^{s\widehat{\Phi}(0)} v(0)\|_{L^2(0,1)}^2  + \int_{0}^{\frac{T}{2}}\!\!\!\!\int_0^1 (s \beta)^{k} v^2 e^{2s\Phi}  \,dx\,dt \notag \\
&\qquad  \leq Ce^{2s [\widehat{\Phi}(0)-\Phi^*(\frac{5T}{8})]} \Big(\int\!\!\!\!\!\int_{Q} (s \theta)^k g^2 e^{2s\eta}\,dx\,dt + \int\!\!\!\!\!\int_{Q_{\omega}} (s\theta)^{k+3} v^2 e^{2s\eta} \,dx dt  \notag \\ 
&  \qquad \qquad \qquad  \qquad \quad \qquad + \int_{0}^{\frac{5T}{8}}\!\!\!\!\int_0^1  (s \beta)^k g^2 e^{2s\sigma}\,dx\,dt\Big).
\end{align}

On the other hand, proceeding as in \eqref{est-right-obs-1}, we also obtain
\begin{align}\label{estim-obs-v-3}
\int_{\frac{T}{2}}^T \!\!\!\!\int_0^1 (s \beta)^{k} v^2 e^{2s\Phi}  \,dx\,dt & = \int_{\frac{T}{2}}^T \!\!\!\!\int_0^1 (s \beta)^{k} v^2 e^{2s\varphi}  \,dx\,dt \notag\\
&\leq C
\int_{\frac{T}{2}}^T \!\!\!\!\int_0^1  \Big((s \theta)^{k} a(x) v_x^2 + (s \theta)^{k+2} \frac{x^2}{a(x)} v^2 \Big) e^{2s\varphi}\,dxdt \notag\\
&\leq C\Big(\int\!\!\!\!\!\int_{Q} (s \theta)^k g^2 e^{2s\eta}\,dx\,dt +  \int\!\!\!\!\!\int_{Q_{\omega}} (s\theta)^{k+3} v^2 e^{2s\eta} \,dx dt\Big).
\end{align}
Note that, since the function $ s \rightarrow s^k e^{cs},$ with $k \geq 0$ and $c<0$, is nonincreasing for larger values of $s$, then, from the fact that $\beta \leq \theta$ in $(0, T)$ we get that,  $$ (s \theta)^k e^{2s\eta} = (s \theta)^k  e^{2s\Psi(x) \theta(t)} \leq (s \beta)^k  e^{2s\Psi(x) \beta(t)} = (s\beta)^{k} e^{2s\sigma}, \quad \text{in} \; Q,$$
for $s$ large enough, where we recall that $\Psi$  is the weight function given in \eqref{weightfunc_nondeg}. 

Finally, combining this fact with the estimates \eqref{estim-obs-v-2} and \eqref{estim-obs-v-3}, we deduce that
\begin{align*}
& s^k \|e^{s\widehat{\Phi}(0)} v(0)\|_{L^2(0,1)}^2  + \int\!\!\!\!\!\int_{Q} (s \beta)^{k} v^2 e^{2s\Phi}  \,dx\,dt \notag \\
&\qquad  \leq Ce^{2s [\widehat{\Phi}(0)-\Phi^*(\frac{5T}{8})]} \Big(\int\!\!\!\!\!\int_{Q} (s\beta)^k g^2 e^{2s\sigma} \,dx\,dt  +  \int\!\!\!\!\!\int_{Q_{\omega}} (s\beta)^{k+3} v^2 e^{2s\sigma} \,dx dt\Big).
\end{align*}
This ends the proof of Lemma \ref{Lemma_modif_Carl_estimate}.
\end{proof}
\section{Null controllability for system \eqref{sys-nonhom}}\label{sect_null_control_nonhomog}
In this section we will apply the Carleman estimates established in Section \ref{sect_carleman_estimate} to deduce the null controllability result for the nonhomogeneous problem \eqref{sys-nonhom}. To this aim, following the arguments presented in \cite{FI1996, TaoGao16}, we introduce, for all $k \geq 0$, the following weighted space

%% proceeding as in the uniformly parabolic setting (see \cite{FI1996, TaoGao16}), we introduce,  the following subspace
%% and for all $\tau \geq s_0$, with $s_0$ defined in Lemma \ref{Lemma_modif_Carl_estimate}, 
$$ E_{s,k} = \big\{y\in Z_T : \quad (s\beta)^{-k/2} e^{-s\sigma} y \in L^2(Q)\big\}$$
%%where the null controllability will be investigated.
endowed with the associated norm
\begin{align*}
\| y \|_{E_{s,k}}^2:=  \int\!\!\!\!\!\int_{Q} (s\beta)^{-k} e^{-2 s \sigma} y^2 \, dx dt.
\end{align*}
Observe that, if we consider $y$ in $E_{s,k}$, then $y$ is continuous in time and satisfies
$$\int\!\!\!\!\!\int_{Q} (s\beta)^{-k} e^{-2 s \sigma} y^2 \, dx dt<+\infty,$$
thus, from the definition of $\sigma$, in particular the fact that $\sigma<0$, we have that $$ y(T,\cdot) = 0 \quad \text{in} \; (0,1).$$

In the following, we denote by $s_0$ the parameter defined in Lemma \ref{Lemma_modif_Carl_estimate}.

Then, we are going to prove the following:
\begin{theorem}\label{Thm_null_Control_nonhom}
Let $T > 0$ and $k \geq 0$. Assume $ (s\beta)^{-k/2} e^{-s\Phi} f \in L^2(Q)$ with $s \geq s_0$. Then, for any $y_0 \in H_a^1(0,1)$,
there exists $u \in L^2(Q)$ such that the associated solution $y$ of system \eqref{sys-nonhom} belongs to $E_{s,k}.$
%%$$y \in E_{s,k} \quad \text{and} \quad y(T,. ) = 0 \qquad \text{in} \; (0, 1).$$

Moreover,  there exists a positive constant $C$ such that the couple $(y, u)$ satisfies
%%for all $s \geq s_0$,
\begin{align}\label{estim_solut_control}
& \int\!\!\!\!\!\int_{Q}  (s \beta)^{-k}  e^{-2s\sigma} y^2  \,dx\,dt +  \int\!\!\!\!\!\int_{Q_{\omega}} (s\beta)^{-(k+3)} e^{-2s\sigma} u^2 \,dx dt \notag \\
&\qquad  \leq Ce^{2s [\widehat{\Phi}(0)-\Phi^*(\frac{5T}{8})]} \Big(\int\!\!\!\!\!\int_{Q} (s\beta)^{-k} e^{-2s\Phi} f^2 \,dx\,dt+ s^{-k} \int_0^1 e^{-2s\widehat{\Phi}(0)} y_0^2\,dx   \Big).
\end{align}

\end{theorem}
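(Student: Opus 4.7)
The plan is to use the Fursikov--Imanuvilov duality (or penalization) method, where the modified Carleman estimate from Lemma \ref{Lemma_modif_Carl_estimate} plays the role of an observability inequality for the adjoint problem \eqref{sys-adj-nonhom}. Let $L^\ast v := -v_t - (a(x)v_x)_x$ and consider the space $P_0$ of smooth functions $v$ on $\bar Q$ satisfying the boundary conditions of \eqref{sys-adj-nonhom}. On $P_0$ introduce the bilinear form
\begin{equation*}
a(v,w) := \int\!\!\!\!\!\int_Q (s\beta)^k e^{2s\sigma} L^\ast v\, L^\ast w\,dx\,dt + \int\!\!\!\!\!\int_{Q_\omega} (s\beta)^{k+3} e^{2s\sigma} v\,w\,dx\,dt.
\end{equation*}
By \eqref{modif_Carl_estimate} the associated quadratic form is a genuine scalar product (it controls $v$ itself and even $v(0,\cdot)$); let $P$ be the completion of $P_0$ with respect to the induced norm $\|\cdot\|_P$. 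Then $a$ is continuous and coercive on the Hilbert space $P$ by construction.

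Next, consider the linear functional
\begin{equation*}
\ell(v) := \int\!\!\!\!\!\int_Q f v\,dx\,dt + \int_0^1 y_0(x)\,v(0,x)\,dx.
\end{equation*}
Applying Cauchy--Schwarz and Lemma \ref{Lemma_modif_Carl_estimate} (with $g = L^\ast v$) yields
\begin{equation*}
|\ell(v)|^2 \leq C\,e^{2s[\widehat\Phi(0)-\Phi^*(\frac{5T}{8})]}\!\Bigl(\int\!\!\!\!\!\int_Q (s\beta)^{-k}e^{-2s\Phi}f^2\,dx\,dt + s^{-k} e^{-2s\widehat\Phi(0)}\|y_0\|_{L^2(0,1)}^2\Bigr)\|v\|_P^2,
\end{equation*}
where for the $f$-term we pair $f v = \bigl[(s\beta)^{-k/2}e^{-s\Phi}f\bigr]\cdot\bigl[(s\beta)^{k/2}e^{s\Phi}v\bigr]$ and invoke \eqref{modif_Carl_estimate} on the second factor, while for the initial-data term we use the first term on the left-hand side of \eqref{modif_Carl_estimate} to control $\|v(0)\|_{L^2(0,1)}$. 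Hence $\ell\in P'$, and by Lax--Milgram there exists a unique $\hat v\in P$ with $a(\hat v,v)=\ell(v)$ for all $v\in P$.

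I would then define
\begin{equation*}
\hat y := (s\beta)^k e^{2s\sigma} L^\ast \hat v, \qquad \hat u := -1_\omega (s\beta)^{k+3} e^{2s\sigma} \hat v.
\end{equation*}
The variational identity $a(\hat v,v)=\ell(v)$ becomes precisely the transposition formulation
\begin{equation*}
\int\!\!\!\!\!\int_Q \hat y\, L^\ast v\,dx\,dt = \int\!\!\!\!\!\int_Q (f+1_\omega \hat u) v\,dx\,dt + \int_0^1 y_0(x) v(0,x)\,dx,
\end{equation*}
valid on a dense subspace of $P$. Testing first against $v\in \mathcal{D}((0,T)\times(0,1))$ recovers \eqref{sys-nonhom} in the distributional sense, and since $\hat u\in L^2(Q_\omega)$ (using that $(s\beta)^{k+3}e^{2s\sigma}\to 0$ as $t\to T$) and $y_0\in H_a^1(0,1)$, Proposition \ref{prop-Well-posed_nonhom} provides a unique $Z_T$-solution of \eqref{sys-nonhom} with this data, which must coincide with $\hat y$. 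Finally, choosing $v=\hat v$ in the variational equality gives
\begin{equation*}
\|\hat y\|_{E_{s,k}}^2 + \int\!\!\!\!\!\int_{Q_\omega}(s\beta)^{-(k+3)}e^{-2s\sigma}\hat u^2\,dx\,dt = a(\hat v,\hat v) = \ell(\hat v) \leq \|\ell\|_{P'}\sqrt{a(\hat v,\hat v)},
\end{equation*}
whence $a(\hat v,\hat v)\leq \|\ell\|_{P'}^2$, and the previous bound on $\|\ell\|_{P'}$ yields \eqref{estim_solut_control}. In particular $\hat y\in E_{s,k}$, and the definition of $E_{s,k}$ together with $\sigma(T,x)=-\infty$ forces $\hat y(T,\cdot)=0$.

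The main obstacle I anticipate is the rigorous identification of the abstract element $\hat v\in P$ with a function regular enough to write down $L^\ast \hat v$, the trace $\hat v|_{Q_\omega}$, and $\hat v(0,\cdot)$ unambiguously, and to justify that the transposition formulation yields a true $Z_T$-solution of \eqref{sys-nonhom}. This is handled by invoking the well-posedness Proposition \ref{prop-Well-posed_nonhom} a posteriori (once $\hat u\in L^2$ is known), bypassing the need to extract further regularity from $P$ directly.
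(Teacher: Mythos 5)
Your proposal is correct and follows essentially the same route as the paper: the same bilinear form $\kappa$ on the completion of smooth functions satisfying the adjoint boundary conditions, the same continuity bound for $\ell$ via Cauchy--Schwarz and the modified Carleman estimate of Lemma \ref{Lemma_modif_Carl_estimate}, Lax--Milgram, the same formulas for $(\tilde y,\tilde u)$, and the same identification with the transposition solution using Proposition \ref{prop-Well-posed_nonhom}. The only cosmetic difference is that the paper frames the construction as minimizing the functional $J(y,u)$ before carrying out the identical variational argument.
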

\begin{proof}
The proof of this Theorem is inspired by \cite{FI1996,TaoGao16}. First of all,  consider the following functional:
$$ J(y,u) = \int\!\!\!\!\!\int_{Q}  (s \beta)^{-k}  e^{-2s\sigma} y^2  \,dx\,dt +  \int\!\!\!\!\!\int_{Q_{\omega}} (s\beta)^{-(k+3)} e^{-2s\sigma} u^2 \,dx dt$$
where $(y, u)$ satisfies system \eqref{sys-nonhom} with $u \in L^2(Q)$ and
\begin{equation}\label{null_state_y_J}
y(T, \cdot) = 0 \qquad \text{in} \; (0, 1).
\end{equation}
By classical arguments (see for instance \cite{Lions71, Lions83}), one can show that $J$ attains its minimizer at a unique point say, $(\tilde{y},\tilde{u})$.

We are going to prove the existence of a dual variable $\tilde{z}$ such that 
\begin{align*}
\begin{cases}
\tilde{y}= (s\beta)^k e^{2s\sigma} \mathcal{L}^*\tilde{z} & \text{in} \; Q\\
\tilde{u}= - 1_{\omega}   (s\beta)^{k+3}  e^{2s\sigma} \tilde{z} & \text{in} \; Q
\end{cases}
\end{align*}
where $ \mathcal{L}^* \tilde{z}= - \tilde{z}_t - (a(x)  \tilde{z}_x )_x$ and $\tilde{z}$ satisfies the boundary conditions 
\begin{equation}\label{BConditions}
 \tilde{z}(\cdot,1)= 0 \quad \text{and} \;
\left\{
\begin{array}{lll}
	& \tilde{z}(\cdot, 0) = 0,   \qquad   \quad (WD) \\
	& (a \tilde{z}_x )  (\cdot, 0)= 0 , \quad (SD) \\
\end{array} \qquad \text{on} \; (0,T)
\right.
\end{equation}

Let us define the following linear space
$$X_a = \Big\{ w \in C^{\infty}(\overline{Q}): \quad  w \quad \text{satisfies} \; \eqref{BConditions}
 \Big\}.$$
In addition, we set
 \begin{align}\label{bilin_form}
 \kappa(z,w) = \int\!\!\!\!\!\int_{Q} (s\beta)^k e^{2s\sigma} \mathcal{L}^*z \mathcal{L}^*w \,dx\,dt  +  \int\!\!\!\!\!\int_{Q_{\omega}} (s\beta)^{k+3}  e^{2s\sigma} z w \,dx dt, \qquad \forall \, z, w \in X_a
 \end{align}
 and
 \begin{align}\label{lin_form}
\ell(w) = \int\!\!\!\!\!\int_{Q} f w \,dx\,dt  +  \int_0^1 y_0 w(0)  dx, \qquad \forall \,  w \in X_a,
\end{align} 
where $f, y_0$ are the functions in \eqref{sys-nonhom}.

Observe that Carleman estimate \eqref{modif_Carl_estimate} holds for all $w \in X_a$. In particular, we have
\begin{equation*}
 s^{k} \int_0^1 e^{2s\widehat{\Phi}(0)} w(0)^2\,dx + \int\!\!\!\!\!\int_{Q} (s\beta)^{k} e^{2s\Phi} w^2 \,dx\,dt  \leq  Ce^{2s [\widehat{\Phi}(0)-\Phi^*(\frac{5T}{8})]} \kappa(w,w), \qquad \forall \,  w \in X_a.
\end{equation*}
Now, let us denote by $ \widetilde{X}_a $ the completion of $X_a$ with the norm $ \| w\|_{\widetilde{X}_a} = (\kappa(w,w))^{1/2}$. Thus, $ \widetilde{X}_a $ is a Hilbert space with this norm. 

Clearly, $\kappa$ is a strictly positive, symmetric and continuous  bilinear form in $ \widetilde{X}_a $.

Moreover, in view of the above inequality, one can see that the linear form $\ell$ is continuous in $ \widetilde{X}_a $. Indeed, employing the Cauchy-Schwarz inequality, for all  $w \in \widetilde{X}_a$, we have
\begin{align}\label{bound_ell}
|\ell(w)| &= \int\!\!\!\!\!\int_{Q} f w \,dx\,dt  +  \int_0^1 y_0 w(0)  dt\notag \\
& \leq 
\bigg(\Big(\int\!\!\!\!\!\int_{Q} (s\beta)^{-k} e^{-2s\Phi} f^2 \,dx\,dt \Big)^{1/2} \Big(\int\!\!\!\!\!\int_{Q} (s\beta)^{k} e^{2s\Phi} w^2 \,dx\,dt \bigg)^{1/2} \notag\\
& \qquad + \Big(s^{-k} \int_0^1 e^{-2s\widehat{\Phi}(0)} y_0^2\,dx  \Big)^{1/2}  \Big(s^{k} \int_0^1 e^{2s\widehat{\Phi}(0)} w(0)^2\,dx  \Big)^{1/2} \bigg)\notag \\
& \leq  
\bigg(\Big[\Big(\int\!\!\!\!\!\int_{Q} (s\beta)^{-k} e^{-2s\Phi} f^2 \,dx\,dt \Big)^{1/2} + \Big(s^{-k} \int_0^1 e^{-2s\widehat{\Phi}(0)} y_0^2\,dx  \Big)^{1/2} \Big] \times \notag\\
& \qquad\Big[\Big(\int\!\!\!\!\!\int_{Q} (s\beta)^{k} e^{2s\Phi} w^2 \,dx\,dt \Big)^{1/2}  +  \Big(s^{k} \int_0^1 e^{2s\widehat{\Phi}(0)} w(0)^2\,dx  \Big)^{1/2} \Big]\bigg)\notag\\
& \leq Ce^{s [\widehat{\Phi}(0)-\Phi^*(\frac{5T}{8})]} \Big[\Big(\int\!\!\!\!\!\int_{Q} (s\beta)^{-k} e^{-2s\Phi} f^2 \,dx\,dt \Big)^{1/2}\notag \\
& \qquad \qquad \qquad \qquad + \Big(s^{-k} \int_0^1 e^{-2s\widehat{\Phi}(0)} y_0^2\,dx  \Big)^{1/2} \Big] \| w\|_{\widetilde{X}_a}.
\end{align}
Hence, by Lax-Milgram Theorem, we infer that there exists a unique $\tilde{z} \in \widetilde{X}_a$ such that 
\begin{equation}\label{variational_equat}
\kappa(\tilde{z},w)=\ell(w), \qquad \forall \,  w \in \widetilde{X}_a.
\end{equation}
%Then, Combining this with the estimate \eqref{bound_ell}, we find that
This fact, together with \eqref{bound_ell}, gives that
\begin{align*}
\kappa(\tilde{z},\tilde{z})=\ell(\tilde{z}) \leq Ce^{s [\widehat{\Phi}(0)-\Phi^*(\frac{5T}{8})]} & \Big[\Big(\int\!\!\!\!\!\int_{Q} (s\beta)^{-k} e^{-2s\Phi} f^2 \,dx\,dt \Big)^{1/2}\notag \\
& + \Big(s^{-k} \int_0^1 e^{-2s\widehat{\Phi}(0)} y_0^2\,dx  \Big)^{1/2} \Big] \| \tilde{z}\|_{\widetilde{X}_a}.
\end{align*}
This implies
\begin{align}\label{bound_z_tilde}
\| \tilde{z} \|_{\widetilde{X}_a} \leq Ce^{s [\widehat{\Phi}(0)-\Phi^*(\frac{5T}{8})]} & \Big[\Big(\int\!\!\!\!\!\int_{Q} (s\beta)^{-k} e^{-2s\Phi} f^2 \,dx\,dt \Big)^{1/2}\notag \\
& + \Big(s^{-k} \int_0^1 e^{-2s\widehat{\Phi}(0)} y_0^2\,dx  \Big)^{1/2} \Big].
\end{align}
Setting
\begin{align}\label{def_tilde_y_u}
\begin{cases}
\tilde{y}= (s\beta)^k e^{2s\sigma} \mathcal{L}^*\tilde{z} \\
\tilde{u}= - 1_{\omega}   (s\beta)^{k+3}  e^{2s\sigma} \tilde{z}
\end{cases}
\end{align}
and using the definition of the bilinear form $\kappa(\cdot, \cdot)$, we can write
\begin{align*}
 \| \tilde{z} \|_{\widetilde{X}_a}^2 & =  \int\!\!\!\!\!\int_{Q}  (s \beta)^{-k}  e^{-2s\sigma} \big((s\beta)^k e^{2s\sigma} \mathcal{L}^*\tilde{z}\big)^2  \,dx\,dt \notag \\
& \quad +  \int\!\!\!\!\!\int_{Q_{\omega}} (s\beta)^{-(k+3)} e^{-2s\sigma} \big(1_{\omega}   (s\beta)^{k+3}  e^{2s\sigma} \tilde{z}\big)^2 \,dx dt \notag \\
& = \int\!\!\!\!\!\int_{Q}  (s \beta)^{-k}  e^{-2s\sigma} \tilde{y}^2  \,dx\,dt +  \int\!\!\!\!\!\int_{Q_{\omega}} (s\beta)^{-(k+3)} e^{-2s\sigma} \tilde{u}^2 \,dx dt
\end{align*}
and, in view of \eqref{bound_z_tilde}, we can deduce
\begin{align}\label{estim_tilde_y_u}
& \int\!\!\!\!\!\int_{Q}  (s \beta)^{-k}  e^{-2s\sigma} \tilde{y}^2  \,dx\,dt +  \int\!\!\!\!\!\int_{Q_{\omega}} (s\beta)^{-(k+3)} e^{-2s\sigma} \tilde{u}^2 \,dx dt \notag \\
&\qquad  \leq Ce^{2s [\widehat{\Phi}(0)-\Phi^*(\frac{5T}{8})]} \Big(\int\!\!\!\!\!\int_{Q} (s\beta)^{-k} e^{-2s\Phi} f^2 \,dx\,dt+ s^{-k} \int_0^1 e^{-2s\widehat{\Phi}(0)} y_0^2\,dx   \Big).
\end{align}
Hence $\tilde y \in E_{s,k}$ and satisfies the inequality \eqref{estim_solut_control}.

In order to complete the proof, it remains  to show that $(\tilde{y}, \tilde{u})$, satisfies the parabolic problem \eqref{sys-nonhom} and the identity \eqref{null_state_y_J}.
First of all, by \eqref{estim_tilde_y_u} it is immediate that $\tilde{y}, \tilde{u} \in L^2(Q)$. 

Moreover, denote by $\hat{y}$ the weak solution of system \eqref{sys-nonhom} associated to the control function $u=\tilde{u}$. Then, $\hat{y}$ also solves this system in the sense of transposition, that is, $\hat{y}$ is the unique function in $ L^2(Q)$ satisfying 
\begin{equation}\label{sol_by_transpo}
\int\!\!\!\!\!\int_{Q} \hat{y}  h \,dx\,dt = \int_0^1 y_0 w(0)  dx + \int\!\!\!\!\!\int_{Q} 1_{\omega} \tilde{u} w \,dx\,dt +  \int\!\!\!\!\!\int_{Q} f w \,dx\,dt , \qquad \forall \, h \in L^2(Q),
\end{equation}
where $w$ is the solution of
\begin{equation*}
\left\{
\begin{array}{lll}
\displaystyle - w_t - (a(x)  w_x )_x = h &  & (t, x) \in Q, \\
w(t,1)= 0 , & & t \in (0, T),  \\
\begin{cases}
& w(t, 0) = 0,   \qquad   \quad (WD), \\
& (a w_x )  (t, 0)= 0 , \quad (SD), \\
\end{cases}  & & t \in (0, T),\\
w(T,x)= 0,  & & x \in  (0, 1).
\end{array}
\right.
\end{equation*}
On the other hand, substituting the expressions of $\tilde{y}$ and $\tilde{u}$, given in \eqref{def_tilde_y_u}, in \eqref{variational_equat}, we obtain
\begin{equation}\label{sol_by_transpo1}
 \int\!\!\!\!\!\int_{Q} \widetilde{y}  h \,dx\,dt - \int\!\!\!\!\!\int_{Q_{\omega}} 1_{\omega} \tilde{u} w \,dx\,dt = \int\!\!\!\!\!\int_{Q} f w \,dx\,dt +\int_0^1 y_0 w(0)  dx, \qquad \forall \, h \in L^2(Q). 
\end{equation}
Hence, \eqref{sol_by_transpo} and \eqref{sol_by_transpo1} imply that $\tilde{y}=\hat{y}$ solves \eqref{sys-nonhom}.

%%As a consequence, $\tilde{u}$ drive the state of system \eqref{sys-nonhom} to zero at time $T$. 
%%Moreover, the inequality \eqref{estim_solut_control} and the fact that $y \in E_{s,k}$ follow by \eqref{estim_tilde_y_u}.
This completes the proof of Theorem \ref{Thm_null_Control_nonhom}.
\end{proof}
We underline that
Theorem \ref{Thm_null_Control_nonhom} provides null controllability property for more regular solution of \eqref{sys-nonhom}. Such a result turns out to be fundamental for the proof of Theorem \ref{Thm_null_Control_memo_1}.

\section{Null controllability for memory system \eqref{sys-memory1}}\label{sect_null_control_memory}
In this section, we analyze the null controllability result for the degenerate parabolic equation \eqref{sys-memory1}. First, for all $k \geq 0$,
%% and for all $\tau \geq s_0$, 
we set
\begin{equation}
E_{s,k,R} = \big\{ w \in E_{s,k} : \quad \|(s\beta)^{-k/2} e^{-s\sigma} w \|_{L^2(Q)} \leq R \big\},
\end{equation}
where $R$ is an arbitrary positive constant. 
%%to be fixed later. 
Clearly, $E_{s,k,R}$ is a bounded, closed and convex subset of $L^2(Q)$.

%\textcolor{blue}{"The parameter $R$ will be chosen sufficiently large in proof of Theorem \ref{Thm_null_Control_memo_1}."}

Let 
%$s \geq s_0$, 
$ w \in E_{s,k,R}$ and consider the following  system:
\begin{equation}\label{sys_integ_w}
\left\{
\begin{array}{lll}
\displaystyle y_t - (a(x)  y_x )_x = \int\limits_0^t b(t,s,x)  w(s,x) \, ds + 1_{\omega} u &  & (t, x) \in Q,\\
y(t,1)= 0, & & t \in (0, T),  \\
\begin{cases}
& y(t, 0) = 0,   \qquad   \quad (WD), \\
& (a y_x ) (t, 0)= 0 , \quad (SD), \\
\end{cases}  & & t \in (0, T),\\
y(0,x)= y_{0}(x),  & & x \in  (0, 1).
\end{array}
\right.
\end{equation}

Hence, the next null controllability result holds.
\begin{proposition}\label{Prop_nul_control_sys_w}
Let $T$ and $R$ strictly positive and $k\geq 0$. Assume that the memory kernel  satisfies,  
%%for all $s \geq s_0$,
\begin{equation}\label{hypoth_kernel}
(T-t)^{2k} e^{\left(\frac{4}{T}\right)^4 \frac{s\gamma d}{(T-t)^4}} b \in L^{\infty}((0, T) \times Q),
\end{equation}
where $\gamma$ and $d$ are the constants of \eqref{weightfunc_deg} and $s$ is the same of Lemma \ref{Lemma_modif_Carl_estimate}. Then, for all $w \in E_{s,k R}$ and for any $y_0 \in H_a^1(0,1)$, there exists $u \in L^2(Q)$ such that the associated solution $y$ of system \eqref{sys_integ_w} 
belongs to $ E_{s,k}$.
%%$$ y \in E_{s,k}  \quad \text{and} \quad y(T, \cdot) = 0 \qquad \text{in} \; (0, 1).$$ 
\end{proposition}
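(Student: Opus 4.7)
The plan is to reduce system \eqref{sys_integ_w} to the nonhomogeneous problem \eqref{sys-nonhom} by regarding the memory integral as a prescribed source: given $w \in E_{s,k,R}$, I would set
$$f(t,x) := \int_0^t b(t,\tau,x)\,w(\tau,x)\,d\tau,$$
so that \eqref{sys_integ_w} becomes precisely \eqref{sys-nonhom} with this choice of $f$. If the hypothesis $(s\beta)^{-k/2} e^{-s\Phi}f \in L^2(Q)$ of Theorem \ref{Thm_null_Control_nonhom} can be verified, that theorem directly furnishes a control $u \in L^2(Q)$ such that the associated solution $y$ of \eqref{sys_integ_w} belongs to $E_{s,k}$.

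The core of the argument is therefore the weighted bound on $f$. Applying the Cauchy--Schwarz inequality in the $\tau$-variable yields $f(t,x)^2 \le T \int_0^T |b(t,\tau,x)|^2 w(\tau,x)^2\,d\tau$. Hypothesis \eqref{hypoth_kernel} gives the pointwise estimate $|b(t,\tau,x)|^2 \le M^2 (T-t)^{-4k} e^{-2(4/T)^4 s\gamma d/(T-t)^4}$, and the inequality $\Phi(t,x) \ge \Phi^*(t) = -\gamma d\beta(t)$ from \eqref{modif_weight_funct} gives $e^{-2s\Phi(t,x)} \le e^{2s\gamma d\beta(t)}$. The next step is to verify the uniform bound
$$(s\beta(t))^{-k} e^{-2s\Phi(t,x)} |b(t,\tau,x)|^2 \le C_0 \qquad \text{on } (0,T)\times(0,T)\times(0,1),$$
for some constant $C_0 = C_0(s,k,T,M)$.

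This is where the precise matching built into \eqref{hypoth_kernel} comes into play: on $[T/2,T]$ the bound $\beta(t) \le (2/T)^4/(T-t)^4$ together with the identity $(4/T)^4 = 16(2/T)^4$ makes the argument of the resulting exponential bounded above by $-30(2/T)^4 s\gamma d/(T-t)^4 \le 0$, while the polynomial factor $(s\beta(t))^{-k}(T-t)^{-4k} = s^{-k} t^{4k}$ is uniformly bounded by $s^{-k}T^{4k}$. On $[0,T/2]$, $\beta$ is constant and $T-t$ is bounded away from $0$, so the estimate is immediate. Getting these exponents and prefactors to cancel is the only real subtlety of the argument and explains the particular form of the constants $(4/T)^4$ and $(T-t)^{2k}$ in \eqref{hypoth_kernel}; this weight-bookkeeping is the main obstacle of the proof.

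Once $C_0$ is in hand, Fubini together with the elementary embedding $E_{s,k,R} \hookrightarrow L^2(Q)$, valid because $r \mapsto r^k e^{2sr\max_{[0,1]}\Psi}$ is bounded on $[0,\infty)$ (recall that $\Psi < 0$ on $[0,1]$), gives
$$\int\!\!\!\!\int_Q (s\beta)^{-k} e^{-2s\Phi} f^2\,dx\,dt \le C_0 T^2 \|w\|_{L^2(Q)}^2 \le C R^2 < +\infty.$$
The hypothesis of Theorem \ref{Thm_null_Control_nonhom} is thus satisfied, and that theorem produces the desired pair $(y,u)$ with $y \in E_{s,k}$, completing the proof.
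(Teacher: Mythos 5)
Your proposal is correct and follows essentially the same route as the paper: it treats the memory integral as the source $f$ in \eqref{sys-nonhom}, verifies the weighted integrability hypothesis of Theorem \ref{Thm_null_Control_nonhom} via Cauchy--Schwarz, the lower bound $\Phi \geq -\gamma d\beta$, the pointwise decay of $b$ from \eqref{hypoth_kernel}, and the boundedness of $(s\beta)^k e^{2s\sigma}$, and then invokes that theorem. Your bookkeeping of the exponents (the factor $(4/T)^4 = 16(2/T)^4$ and the cancellation $(s\beta)^{-k}(T-t)^{-4k} = s^{-k}t^{4k}$ on $[T/2,T]$) is in fact more explicit than what the paper records.
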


Notice that,  condition \eqref{hypoth_kernel} may appear as a quite strong restriction on the admissible kernel function $b$. Notwithstanding, it is instead a natural one, since the only thing that we are asking is its integrability with respect to the Carleman weight. In other words, $b$  should decay exponentially to $0$ as $t$ goes to $T^-$. Recall that this assumption is less restrictive, for larger values of
the parameter $k > 0$, than \eqref{hyp-kernel-k0}. 

\begin{proof}
Let 
%$s\ge s_0$, 
$w \in E_{s,k,R}$ and let $y\in Z_T$ the solution of \eqref{sys_integ_w}.
Using the fact that $- \gamma d \beta \leq \Phi $ in $Q$ (see \eqref{caract_of_psi}), we get that
\begin{align*}
\int\!\!\!\!\!\int_{Q} & (s\beta)^{-k} e^{-2s\Phi} \Big(\int\limits_0^t b(t,s,x)  w(s,x) \, ds \Big)^2 \,dx\,dt \notag\\
&\leq C_T \int\!\!\!\!\!\int_{Q} \int\limits_0^t (s\beta)^{-k} e^{-2s\Phi} b^2(t,s,x) w^2(s,x) \, ds \,dx\,dt \notag\\
&\leq C_T \int\!\!\!\!\!\int_{Q} \int\limits_0^t (s\beta)^{-k} e^{2s \gamma d \beta} b^2(t,s,x) w^2(s,x) \, ds \,dx\,dt \notag\\
&\leq C_T  s^{-k} \int\!\!\!\!\!\int_{Q} \int\limits_0^t (T-t)^{4k} e^{\frac{2s\gamma d}{(T/4)^4(T-t)^4}} b^2(t,s,x) w^2(s,x) \, ds \,dx\,dt.
\end{align*}
Hence, by virtue of condition \eqref{hypoth_kernel}, we have
\begin{align}\label{estim_integral_bw}
\int\!\!\!\!\!\int_{Q} (s\beta)^{-k} e^{-2s\Phi} & \Big(\int\limits_0^t b(t,s,x)  w(s,x) \, ds \Big)^2 \,dx\,dt \notag\\
&\leq C_T s^{-k} \int\!\!\!\!\!\int_{Q} w^2  \,dx\,dt;
\end{align}
therefore, using H\"{o}lder's inequality, the fact that $\sup\limits_{(t,x) \in \overline{Q}}(s \beta(t))^{k}  e^{2s\sigma}(t,x) < +\infty$ and $w \in E_{s,k,R}$, we conclude that
\begin{align*}
\int\!\!\!\!\!\int_{Q} & (s\beta)^{-k} e^{-2s\Phi} \Big(\int\limits_0^t b(t,s,x)  w(s,x) \, ds \Big)^2 \,dx\,dt \notag\\
&\leq C_T s^{-k} \Big(\sup\limits_{(t,x) \in \overline{Q}}(s \beta(t))^{k}  e^{2s\sigma}(t,x) \Big) \int\!\!\!\!\!\int_{Q}  (s \beta)^{-k}  e^{-2s\sigma} w^2 \,dx\,dt \notag \\
& \leq C_T s^{-k} R^2 < +\infty.
\end{align*}

This implies that $ (s\beta)^{-k/2} e^{-s\Phi} \Big(\int\limits_0^t b(t,s,x)  w(s,x) \, ds \Big) \in L^2(Q)$. Hence, in view of Theorem \ref{Thm_null_Control_nonhom}, we deduce that there exists $u \in L^2(Q)$ such that the associated solution $y$ of \eqref{sys_integ_w} belongs to
$ E_{s,k}$.
%%$$ \textcolor{red}{y \in E_{s,k} } \quad \text{and} \quad y(T, \cdot) = 0 \qquad \text{in} \; (0, 1),$$
Hence, the conclusion follows.
\end{proof}
As a consequence of Proposition \ref{Prop_nul_control_sys_w} and Kakutani's fixed point Theorem, we obtain the following result.
%Indeed, in this theorem we assume that condition \eqref{hypoth_kernel} holds for all $s\geq s_0$ as in Proposition \ref{Prop_nul_control_sys_w}, since in the proof, we will use the result in this proposition to ensure that $\Lambda$ is not empty, see the details in the proof.

\begin{theorem}\label{Thm_null_Control_memo_1}
Let $T >0$, $k\geq 0$ and assume that \eqref{hypoth_kernel} holds with $s\ge s_0$ such that
\[
C s^{-k} e^{-s\gamma \left(\frac{2}{T} \right)^8  d^{\ast}} \leq \frac{1}{2},
\] 
where $\gamma$, $d^*$ and $C$ are the constants that appear in \eqref{condition_d_gamma} and \eqref{estim_solut_control}, respectively.

Then, for any $y_0 \in H_a^1(0,1)$, there exists $u \in L^2(Q)$ such that the associated solution $y\in Z_T$ of \eqref{sys-memory1} satisfies
$$ y(T, \cdot) = 0 \qquad \text{in} \; (0, 1).$$
\end{theorem}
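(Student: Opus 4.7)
The plan is to obtain a solution of \eqref{sys-memory1} as a fixed point of the multivalued map naturally associated with Proposition \ref{Prop_nul_control_sys_w}. For $w \in E_{s,k,R}$, set
\[
\Lambda(w) := \bigl\{\, y \in E_{s,k} : \exists\, u \in L^2(Q),\ (y,u)\ \text{solves}\ \eqref{sys_integ_w}\ \text{and satisfies}\ \eqref{estim_solut_control}\,\bigr\}.
\]
By Proposition \ref{Prop_nul_control_sys_w}, $\Lambda(w)$ is non-empty, and any fixed point $y \in \Lambda(y)$ is by construction a solution of \eqref{sys-memory1} with $y \in E_{s,k}$; the remark following the definition of $E_{s,k}$ then forces the desired terminal condition $y(T,\cdot)=0$ in $(0,1)$.

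To apply Kakutani's theorem I would verify: (i) $\Lambda$ sends $E_{s,k,R}$ into itself for a suitable $R>0$; (ii) each $\Lambda(w)$ is convex and closed; (iii) $\Lambda$ is upper semicontinuous with relatively compact image. For (i), combining \eqref{estim_solut_control} with the key estimate \eqref{estim_integral_bw} applied to $f = \int_0^t b(t,\cdot,\cdot)w\,ds$, together with the finiteness of $\sup_Q (s\beta)^k e^{2s\sigma}$, yields a bound of the schematic form
\[
\|y\|_{E_{s,k}}^2 \leq C\,e^{2s[\widehat{\Phi}(0)-\Phi^{*}(5T/8)]}\Bigl( C_T\, s^{-k}\,R^2 + s^{-k}\, e^{-2s\widehat{\Phi}(0)}\,\|y_0\|_{L^2(0,1)}^2\Bigr).
\]
The stated quantitative hypothesis $Cs^{-k}e^{-s\gamma(2/T)^8 d^*}\le 1/2$ is calibrated precisely so that the coefficient of $R^2$ on the right is $\le 1/2$, after which $R$ is chosen large enough that the $y_0$-contribution is $\le R^2/2$, giving $\|y\|_{E_{s,k}}\le R$. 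Item (ii) follows from linearity of the map $(y,u) \mapsto y_t - (ay_x)_x - \int_0^t bw\,ds - 1_\omega u$ and the stability of \eqref{estim_solut_control} under strong $L^2$-limits.

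The main obstacle is (iii), the upper semicontinuity and compactness of the graph of $\Lambda$. Here, given a sequence $w_n \to w$ strongly in $L^2(Q)$ inside $E_{s,k,R}$ and $y_n \in \Lambda(w_n)$ with associated controls $u_n$, the uniform bound from (i) combined with the energy estimate of Proposition \ref{prop-Well-posed_nonhom} provides $y_n$ bounded in $Z_T$; Aubin-Lions then delivers a subsequence $y_n \to y$ strongly in $C([0,T];L^2(0,1))\cap L^2(0,T;H_a^1(0,1))$, while the weighted bound on the controls gives $u_n \rightharpoonup u$ weakly in the appropriate weighted $L^2(Q_\omega)$. The kernel hypothesis \eqref{hypoth_kernel} — already exploited in \eqref{estim_integral_bw} — allows passage to the limit in the convolution $\int_0^t b(t,s,\cdot)w_n(s,\cdot)\,ds$, so that $(y,u)$ solves \eqref{sys_integ_w} with source $w$; weak lower semicontinuity of the weighted norms in \eqref{estim_solut_control} yields $y \in \Lambda(w)$. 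Kakutani's theorem then produces a fixed point $y \in \Lambda(y)$, concluding the proof.
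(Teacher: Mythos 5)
Your overall architecture --- Kakutani's theorem applied to the multivalued map $w\mapsto\Lambda(w)$ built from Proposition \ref{Prop_nul_control_sys_w}, with the three verifications (i)--(iii) --- is exactly the paper's, and your treatment of non-emptiness, convexity, compactness via Aubin--Lions, and upper semicontinuity matches the paper's proof. The gap is in step (i), the self-map property, which is precisely where the quantitative hypothesis on $s$ has to be earned. In your schematic bound you factor out $\sup_{\overline Q}(s\beta)^k e^{2s\sigma}$ as a finite constant and are left with the prefactor $e^{2s[\widehat\Phi(0)-\Phi^*(5T/8)]}$ multiplying $s^{-k}R^2$. But $\widehat\Phi(0)-\Phi^*\big(\frac{5T}{8}\big)=\gamma\beta(0)(d^*-d)+\gamma d\,\beta\big(\frac{5T}{8}\big)>\gamma\big(\frac{2}{T}\big)^8 d^*>0$, since $\beta\big(\frac{5T}{8}\big)=\big(\frac{16}{15}\big)^4\beta(0)>\beta(0)$; so this prefactor grows exponentially in $s$, and the hypothesis $Cs^{-k}e^{-s\gamma(2/T)^8d^*}\leq\frac{1}{2}$ cannot make the coefficient of $R^2$ small --- the exponent there has the opposite sign. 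Asserting that the hypothesis is ``calibrated precisely'' for this is asserting the conclusion of the one nontrivial computation in the proof.

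The paper closes this by not discarding the decay carried by $e^{2s\sigma}$: it writes $e^{2s\sigma}=e^{\frac{s}{2}\sigma}\,e^{\frac{3}{2}s\sigma}$, absorbs only $(s\beta)^k e^{\frac{s}{2}\sigma}$ into a constant, keeps the exponentially small factor $e^{\frac{3}{2}s\sigma}\leq e^{\frac{3}{2}s\widehat\sigma(0)}$ (recall $\widehat\sigma(0)<0$), and then proves $2\widehat\Phi(0)-2\Phi^*\big(\frac{5T}{8}\big)+\frac{3}{2}\widehat\sigma(0)<-\gamma\big(\frac{2}{T}\big)^8 d^*$. That inequality is not automatic: it requires choosing $\gamma$ in a specific interval so that $\frac{3}{2}\widehat\sigma\leq\widehat\Phi$ (a compatibility condition between the degenerate weight $\psi$ and the nondegenerate weight $\Psi$), and taking $d\geq 10\,d^*$ so that $3(d^*-d)+2d\big(\frac{16}{15}\big)^4<-d^*$. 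Only after these choices does the stated hypothesis on $s$ deliver the coefficient $\frac{1}{2}$ in front of $R^2$, after which $R$ can be chosen large to absorb the $y_0$-term. You need to supply this argument (or an equivalent quantitative one); as written, step (i) does not go through.
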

\begin{remark}
\begin{itemize}
\item Let us recall that, without any hypothesis on the kernel $b$, the null controllability of \eqref{sys-memory1} fails (see \cite{Guerrero2013,Zhou2014}). Hence, the decaying condition \eqref{hypoth_kernel} could be necessary.
\item A condition similar to \eqref{hypoth_kernel} already appears in the work of Q. Tao and H. Gao in  \cite{TaoGao16} for uniformly parabolic equations (see \eqref{hyp_kernel_Tao_Gao}). Hence, the null controllability result stated in Theorem \ref{Thm_null_Control_memo_1} for the degenerate equation with memory can be seen as an extension to the one obtained in \cite{TaoGao16}.
\item The difference on the powers of the exponential terms in \eqref{hypoth_kernel} and \eqref{hyp_kernel_Tao_Gao} is mainly due to the different weighted time functions considered in these two contexts.
\item
 Owing to Remark \ref{remrk_on_theta}, we can actually decrease the exponent $4$ in the assumption \eqref{hypoth_kernel} to the exponent $2$. In particular, in place of  \eqref{hypoth_kernel} we can assume
\begin{equation*}
(T-t)^{2k} e^{\frac{\hat{C}}{(T-t)^2}} b \in L^{\infty}((0, T) \times Q),
\end{equation*}
where $\hat{C}= \big(\frac{4}{T}\big)^2 s\gamma d$.
\end{itemize}
\end{remark}

\begin{proof}[Proof of Theorem \ref{Thm_null_Control_memo_1}]
For the moment take 
%$s \ge s_0$ and
$R>0$ sufficiently large. Define, as in \cite{TaoGao16}, the multivalued mapping $ \Lambda : E_{s,k,R} \subset E_{s,k} \rightarrow 2^{E_{s,k}} $ in the following
way: for every $w \in E_{s,k,R}$, $\Lambda(w)$ is the set of $y \in E_{s,k}$ such that for some
$u\in L^2(Q)$ satisfying 
\begin{equation}\label{bound_u}
\int\!\!\!\!\!\int_{Q_{\omega}} (s\beta)^{-(k+3)} e^{-2s\sigma} u^2 \,dx dt \leq Ce^{2s [\widehat{\Phi}(0)-\Phi^*(\frac{5T}{8})]} \Big(R^2+ s^{-k} \int_0^1 e^{-2s\widehat{\Phi}(0)} y_0^2\,dx   \Big),
\end{equation}
 the associated solution $y$ of \eqref{sys_integ_w} satisfies 
\begin{equation}\label{null_state_y}
y \in E_{s,k} \quad\text{and} \quad y(T, \cdot) = 0 \qquad \text{in} \; (0, 1).
 \end{equation}

Thus, our task is reduced to prove that $\Lambda $ admit at least one fixed point in $E_{s,k,R}$. To this aim, it suffices to check that $\Lambda$ satisfies the assumptions of Kakutani's fixed point Theorem. Next, we are going to check that all the conditions to apply such a theorem
in $L^2(Q)$ topology are satisfied.

Clearly, $\Lambda(w)$ is a closed set of $L^2(Q)$. Moreover, thanks to  Proposition \ref{Prop_nul_control_sys_w}, $\Lambda(w)$ is non empty. The fact that the identity in \eqref{null_state_y} is stable by convex combinations yields the convexity of $ \Lambda(w)$.

Now, let us prove that $\Lambda(E_{s,k,R}) \subset E_{s,k,R}$ for a sufficiently large $R$. Using the inequality \eqref{estim_solut_control}, condition \eqref{hypoth_kernel} and proceeding as in \eqref{estim_integral_bw}, we have
\begin{align*}
\int\!\!\!\!\!\int_{Q} & (s \beta)^{-k}  e^{-2s\sigma} y^2  \,dx\,dt +  \int\!\!\!\!\!\int_{Q_{\omega}} (s\beta)^{-(k+3)} e^{-2s\sigma} u^2 \,dx dt \notag \\
&\qquad  \leq Ce^{2s [\widehat{\Phi}(0)-\Phi^*(\frac{5T}{8})]} \Big(\int\!\!\!\!\!\int_{Q} (s\beta)^{-k} e^{-2s\Phi} \Big(\int\limits_0^t b(t,s,x)  w(s,x) \, ds \Big)^2 \,dx\,dt \notag\\
& \qquad \qquad \qquad \qquad \qquad + s^{-k} \int_0^1 e^{-2s\widehat{\Phi}(0)} y_0^2\,dx   \Big)\notag \\
&\qquad  \leq Ce^{2s [\widehat{\Phi}(0)-\Phi^*(\frac{5T}{8})]} \Big(s^{-k} \int\!\!\!\!\!\int_{Q}   w^2 \,dx\,dt + s^{-k} \int_0^1 e^{-2s\widehat{\Phi}(0)} y_0^2\,dx   \Big).
\end{align*}
Therefore, applying H\"{o}lder's inequality, we obtain
\begin{align*}
\int\!\!\!\!\!\int_{Q} & (s \beta)^{-k}  e^{-2s\sigma} y^2  \,dx\,dt +  \int\!\!\!\!\!\int_{Q_{\omega}} (s\beta)^{-(k+3)} e^{-2s\sigma} u^2 \,dx dt \notag \\
&  \leq C s^{-k} e^{2s [\widehat{\Phi}(0)-\Phi^*(\frac{5T}{8})]} \bigg(\Big(\sup\limits_{(t,x) \in \overline{Q}}(s \beta(t))^{k}  e^{2s\sigma(t,x)} \Big)
\Big(\int\!\!\!\!\!\int_{Q}  (s \beta)^{-k}  e^{-2s\sigma} w^2 \,dx\,dt \Big)\\
&\qquad \qquad\qquad \qquad \qquad + \int_0^1 e^{-2s\widehat{\Phi}(0)} y_0^2\,dx   \bigg).
\end{align*}
In particular, since $ \Phi \leq \widehat{\Phi}(0)$, $ \sigma \leq \widehat{\sigma}(0) \; \text{in} \, Q$ (see \eqref{modif_weight_funct}), 
$\sup\limits_{(t,x) \in \overline{Q}}(s \beta(t))^{k}  e^{\frac{s}{2}\sigma(t,x)} < + \infty
$
and $w \in E_{s,k,R}$, the last inequality becomes
\begin{align}\label{inequality_u_y}
\int\!\!\!\!\!\int_{Q} & (s \beta)^{-k}  e^{-2s\sigma} y^2  \,dx\,dt +  \int\!\!\!\!\!\int_{Q_{\omega}} (s\beta)^{-(k+3)} e^{-2s\sigma} u^2 \,dx dt \notag \\
&\qquad  \leq C s^{-k} \bigg(e^{s[2 \widehat{\Phi}(0)-2 \Phi^*(\frac{5T}{8})+\frac{3}{2} \widehat{\sigma}(0)]} R^2 + e^{-2 s \Phi^*(\frac{5T}{8})} \int_0^1  y_0^2\,dx   \bigg).
\end{align}
%============================
On the other hand, by taking the parameter $\rho$ in \eqref{weightfunc_nondeg} so that 
%\begin{equation}\label{d_rho}
$\rho > \frac{ln3}{\|\sigma\|_{\infty}},$
%\end{equation}
one can show that the interval $\displaystyle\Big(\frac{e^{2 \rho \|\sigma\|_{\infty}}}{d-d^\star}, \, \frac{3( e^{2 \rho \|\sigma\|_{\infty}} - e^{ \rho \|\sigma\|_{\infty}})}{2 (d-d^\star)}
\Big)$ is nonempty, and thus, we can choose the constant $\gamma$ ( see \eqref{weightfunc_deg}) in such a way
$$
\frac{e^{2 \rho \|\sigma\|_{\infty}} -1}{d-d^\ast} < \gamma < \frac{3\big( e^{2 \rho \|\sigma\|_{\infty}} - e^{\rho \|\sigma\|_{\infty}} \big)}{2(d-d^\star)} .
$$
%Indeed, after some easy computations, for $d$ and $\rho$ satisfying \eqref{d_rho}, we find that
%\begin{align*}
%&\frac{3}{2 d}
%( e^{2 \rho \|\sigma\|_{\infty}} - e^{ \rho \|\sigma\|_{\infty}}) - \frac{e^{2 \rho \|\sigma\|_{\infty}}-1}{d-d^\star}\\
%& = \frac{(d- 3 d^\star)}{2 d (d-d^\star)}e^{ \rho \|\sigma\|_{\infty}} \Big[ e^{\rho \|\sigma\|_{\infty}} -  \frac{3 (d- d^\star)}{(d- 3 d^\star)} \Big] + \frac{1}{d-d^\star} > 0.
%\end{align*}
Thus, as a straightforward consequence, one has
\begin{equation}\label{compar_max_min}
\frac{3}{2}\hat{\sigma}(t) \leq \hat{\Phi}(t) \qquad \text{for every}\quad t\in (0,T).
\end{equation}
%============================
Using \eqref{compar_max_min}, the definitions of $ \widehat{\Phi}$ and $\Phi^*$, and choosing $d \geq 10 d^{\ast}$, we find
\begin{align*}
2 \widehat{\Phi}(0)-2 \Phi^*(\frac{5T}{8})+\frac{3}{2} \widehat{\sigma}(0) & \leq  3 \widehat{\Phi}(0)-2 \Phi^*(\frac{5T}{8}) \\
& = 3 \gamma (d^{\ast} -d) \beta(0) + 2 \gamma d \beta\Big(\frac{5T}{8}\Big)\\
& = \gamma \Big(\frac{2}{T} \Big)^8 \Big[ 3 (d^{\ast} -d) ) + 2  d \Big(\frac{16}{15} \Big)^4 \Big]\\
& = \gamma \Big(\frac{2}{T} \Big)^8 \Big[3 d^{\ast}  - d \Big(3 - 2 \Big(\frac{16}{15} \Big)^4\Big) \Big]\\
& < -  \gamma \Big(\frac{2}{T} \Big)^8  d^{\ast} < 0. 
\end{align*}
%Now, taking the parameter $s$  large enough such that
By assumption $s$ is such that
$$ C s^{-k} e^{s[2 \widehat{\Phi}(0)-2 \Phi^*(\frac{5T}{8})+\frac{3}{2} \widehat{\sigma}(0)]} \leq \frac{1}{2},$$
thus, we immediately obtain, from this last inequality and \eqref{inequality_u_y}, 
\begin{align}\label{estim_u_y_of_sys_w}
\int\!\!\!\!\!\int_{Q}  (s \beta)^{-k}  e^{-2s\sigma} y^2 & \,dx\,dt +  \int\!\!\!\!\!\int_{Q_{\omega}} (s\beta)^{-(k+3)} e^{-2s\sigma} u^2 \,dx dt \notag \\
&\qquad \leq \bigg(\frac{1}{2} R^2+ C e^{-2 s \Phi^*(\frac{5T}{8})} s^{-k} \int_0^1 y_0^2\,dx \bigg).
\end{align}
Hence, for $R$  sufficiently large, we have
\begin{align*}
\int\!\!\!\!\!\int_{Q} & (s \beta)^{-k}  e^{-2s\sigma} y^2  \,dx\,dt  \leq R^2.
\end{align*}
As a consequence, $\Lambda(E_{s,k,R}) \subset E_{s,k,R}$.

Furthermore, let $\{w_n\}$ be a sequence of $E_{s,k,R}$. Thanks to Proposition \ref{prop-Well-posed_nonhom}, the associated solutions $\{y_n\}$ are bounded in  $Z_T$. Then, in view of Aubin-Lions Theorem, this implies that $\Lambda(E_{s,k,R})$ is relatively compact in $L^2(Q)$.

Let us finally check that $\Lambda$ is upper-semicontinuous under the $L^2$ topology. To this aim, let $\{w_n\}$ be a sequence satisfying $w_n \rightarrow w$ in $ E_{s,k,R}$ and $ y_n \in \Lambda(w_n)$ such that $y_n \rightarrow y$ in $L^2(Q)$. Our objective is to prove that $ y \in \Lambda(w)$.
At first, observe that for any $w_n \in E_{s,k,R}$, we can find at least one control $u_n \in L^2(Q)$ such that the associated solution $y_n$ belongs to $L^2(Q)$. By virtue of  Proposition \ref{prop-Well-posed_nonhom} and \eqref{estim_u_y_of_sys_w}, we deduce that there is a subsequence satisfying
\begin{align}
 u_n \rightarrow u \quad &\text{weakly in} \; L^2(Q)\notag \\
 y_n \rightarrow \tilde{y} \quad & \text{weakly in} \; Z_T \quad \text{and} \\
  % \; L^2(0, T; H_{a}^{2}(0,1))\cap H^1(0, T; L^2(0,1))\notag \\
 &\text{strongly in} \; C(0, T; L^2(0,1)).
\end{align}
This yields $y= \tilde{y} $ in $L^2(Q)$. 

Since $(y_n,u_n)$ satisfies the system 
\begin{equation}\label{sys_integ_wn}
\left\{
\begin{array}{lll}
\displaystyle y_{n,t} - (a(x)  y_{n,x} )_x = \int\limits_0^t b(t,s,x)  w_n(s,x) \, ds + 1_{\omega} u_n, &  & (t, x) \in Q, \\
y_n(t,1)= 0, & & t \in (0, T),  \\
\begin{cases}
& y_n(t, 0) = 0,   \qquad   \quad (WD), \\
& (a y_{n,x} )   (t, 0)= 0 , \quad (SD), \\
\end{cases}  & & t \in (0, T),\\
y_n(0,x)= y_{0}(x),  & & x \in  (0, 1),
\end{array}
\right.
\end{equation}
hence passing to weak limit, it follows that the couple $(y,u)$ satisfies \eqref{sys_integ_w}. This provides that $y \in \Lambda(w)$ and, therefore, $\Lambda$ is upper semicontinuous.

Consequently, using the Kakutani's fixed point Theorem in the $L^2(Q)$ topology for the mapping $\Lambda$, we infer that there is at least one $ y \in E_{s,k,R}$ such that $y \in \Lambda(y)$. Thus, by the definition of $\Lambda$,  there exists at least one couple $(y,u)$ satisfying all the conditions in Theorem \ref{Thm_null_Control_memo_1}.  The uniqueness of $y$ follows by Proposition \ref{prop-Well-posed_memory}. Hence, the proof of Theorem \ref{Thm_null_Control_memo_1} is complete.
\end{proof}
%\begin{remark}
%Note that the Kakutani's fixed point Theorem provides also the existence of the solution $y$ of \eqref{sys-memory1} in $Z_T = L^2(0, T; H_{a}^{2}(0,1))\cap H^1(0, T; L^2(0,1))$. The uniqueness result is an immediate consequence of Proposition \ref{prop-Well-posed_memory}.
%\end{remark}
Clearly, Theorem  \ref{Thm_null_Control_memo_1}  holds also in a general domain $(t^*,T) \times (0,1)$ with suitable changes. Thanks to this fact, the following null controllability result holds for memory system \eqref{sys-memory1}. 
\begin{theorem}\label{Thm_null_Control_memo_2}
Let $T >0$, $k \geq 0$ and assume that \eqref{hypoth_kernel} holds with $s$ as in Theorem \ref{Thm_null_Control_memo_1}.
%for all $s\ge s_0$.
Then, for any $y_0 \in L^2(0,1)$,
there exists $u \in L^2(Q)$ such that the associated solution $y \in W_T$  of \eqref{sys-memory1} satisfies
$$ \quad y(T, \cdot) = 0 \qquad \text{in} \; (0, 1). $$
\end{theorem}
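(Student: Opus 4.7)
The plan is to bridge the gap between $H_a^1$ and $L^2$ initial data by a standard two-stage parabolic-smoothing argument, relying on Theorem \ref{Thm_null_Control_memo_1} applied on a shifted interval as licensed by Remark \ref{remrk_on_theta}. Pick any $t^* \in (0, T)$ and, in the first stage, set $u \equiv 0$ on $(0, t^*)$; by Proposition \ref{prop-Well-posed_memory} the associated solution belongs to $W_{t^*} \subset L^2(0, t^*; H_a^1(0, 1))$, so a Fubini argument produces some $t_1 \in (0, t^*)$ with $y(t_1, \cdot) \in H_a^1(0, 1)$. In the second stage, on $(t_1, T) \times (0, 1)$, I split the memory kernel as
\[
\int_0^t b(t, \tau, x)\, y(\tau, x)\, d\tau = \int_{t_1}^t b(t, \tau, x)\, y(\tau, x)\, d\tau + f(t, x),
\]
where $f(t, x) := \int_0^{t_1} b(t, \tau, x)\, y(\tau, x)\, d\tau$ is a known source produced by the first-stage solution. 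I then apply Theorem \ref{Thm_null_Control_memo_1} on $(t_1, T)$, with initial datum $y(t_1, \cdot) \in H_a^1(0, 1)$ and augmented by the extra inhomogeneity $f$. This yields a control $\tilde u \in L^2((t_1, T) \times (0, 1))$ whose associated second-stage state $\tilde y \in Z_{T - t_1}$ satisfies $\tilde y(T, \cdot) = 0$. Concatenating by setting $u \equiv 0$ on $(0, t_1)$ and $u = \tilde u$ on $(t_1, T)$, uniqueness in Proposition \ref{prop-Well-posed_memory} ensures that the full solution $y \in W_T$ of \eqref{sys-memory1} satisfies $y(T, \cdot) = 0$.

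The main obstacle is incorporating the extra source $f$ into the Carleman machinery of Theorem \ref{Thm_null_Control_nonhom} that underlies the Kakutani fixed-point scheme of Theorem \ref{Thm_null_Control_memo_1}; concretely, one must verify that $(s\beta)^{-k/2} e^{-s\Phi} f \in L^2((t_1, T) \times (0, 1))$. This is precisely where the exponential decay of $b$ at $t = T$ built into \eqref{hypoth_kernel} is indispensable. Arguing as in the derivation of \eqref{estim_integral_bw}, but integrating in $\tau$ only over $(0, t_1)$, Cauchy--Schwarz together with \eqref{hypoth_kernel} yields a bound of the form
\[
\int_{t_1}^T\!\!\int_0^1 (s\beta)^{-k} e^{-2s\Phi} f^2 \, dx\, dt \le C_T\, s^{-k} \int_0^{t_1}\!\!\int_0^1 y^2 \, dx\, d\tau,
\]
whose right-hand side is finite because $y \in C([0, t_1]; L^2(0, 1))$. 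Once $f$ is admissible in this weighted sense, the remaining properties of the multivalued map $\Lambda$ (non-emptiness, convexity, closedness, compactness, upper semicontinuity) transfer verbatim from the proof of Theorem \ref{Thm_null_Control_memo_1} to the shifted interval $(t_1, T)$, so Kakutani's theorem applies and delivers the desired $\tilde u$, completing the argument.
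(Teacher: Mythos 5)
Your proposal is correct and follows essentially the same route as the paper: a first stage of free evolution from the $L^2$ datum to reach an intermediate time where the state lies in $H_a^1(0,1)$, followed by an application of Theorem \ref{Thm_null_Control_memo_1} on the shifted interval and concatenation of the two pieces. In fact you are somewhat more careful than the paper, which writes the second-stage system with the memory integral $\int_0^t$ and invokes Theorem \ref{Thm_null_Control_memo_1} without comment, whereas you isolate the tail $f(t,x)=\int_0^{t_1} b(t,\tau,x)y(\tau,x)\,d\tau$ as a known source and verify its admissibility $(s\beta)^{-k/2}e^{-s\Phi}f\in L^2$ in the Carleman framework of Theorem \ref{Thm_null_Control_nonhom} --- a point the paper leaves implicit.
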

\begin{proof}
Consider the following homogeneous parabolic problem:
\begin{equation*}
\left\{
\begin{array}{lll}
\displaystyle w_t - (a(x)  w_x )_x = \int\limits_0^t b(t,s,x)  w(s,x) \, ds  &  & (t, x) \in (0, \frac{T}{2}) \times (0, 1), \\
w(t,1)= 0, & & t \in (0, \frac{T}{2}),  \\
\begin{cases}
& w(t, 0) = 0,   \qquad   \quad (WD), \\
& (a w_x )  (t, 0)= 0 , \quad (SD), \\
\end{cases}  & & t \in (0, \frac{T}{2}),\\
w(0,x)= y_{0}(x),  & & x \in  (0, 1),
\end{array}
\right.
\end{equation*}
where $y_0$ is the initial condition in $\eqref{sys-memory1}.$

By Proposition \ref{prop-Well-posed_memory}, the solution of this system belongs to 
$$ W_T^*: = L^2\left(0, \frac{T}{2}; H_a^1(0,1)\right)\cap C\left(\left[0, \frac{T}{2}\right]; L^2(0,1)\right).$$
Then, there exists $t^* \in (0, \frac{T}{2})$ such that
$w(t^*, \cdot) := w^*(\cdot) \in H_a^1(0,1)$.

Now, we consider the following controlled parabolic system:
\begin{equation*}
\left\{
\begin{array}{lll}
\displaystyle z_t - (a(x)  z_x )_x = \int\limits_0^t b(t,s,x)  z(s,x) \, ds + 1_{\omega} h &  & (t, x) \in (t^*, T) \times (0, 1), \\
z(t,1)= 0, & & t \in (t^*, T),  \\
\begin{cases}
& z(t, 0) = 0,   \qquad   \quad (WD), \\
& (a z_x )  (t, 0)= 0 , \quad (SD), \\
\end{cases}  & & t \in (t^*, T),\\
z(t^*,x)= w^*(x),  & & x \in  (0, 1).
\end{array}
\right.
\end{equation*}
Hence, thanks to Theorem \ref{Thm_null_Control_memo_1}, there exists  $h \in L^2((t^*, T) \times (0, 1))$ such that the associated solution $z \in Z_T^* : =  L^2(t^*, T; H_{a}^{2}(0,1))\cap H^1(t^*, T; L^2(0,1))$ satisfies
$$ \quad z(T, \cdot) = 0 \qquad \text{in} \; (0, 1). $$
Finally, setting 
\begin{align*}
y:=
\left\{
\begin{array}{lll}
w, & \text{in} \quad \big[0, t^*\big],\\
z, & \text{in} \quad \big[t^*, T\big]
\end{array}
\right. 
 \quad  \text{and} \quad u:=
\left\{
\begin{array}{lll}
0, & \text{in} \quad \big[0, t^*\big],\\
h, & \text{in} \quad \big[t^*, T\big],
\end{array}
\right. 
\end{align*}
one can prove that $y\in W_T$ solves the system \eqref{sys-memory1} associated to $u$ and is such that
$$ \quad y(T, \cdot) = 0 \qquad \text{in} \; (0, 1). $$
Hence, the thesis follows.
\end{proof}

\begin{remark}
In the present context, by Theorem \ref{Thm_null_Control_memo_2}, one can deduce  immediately the null controllability result for \eqref{sys-memory1} when the control acts at the {\it nondegenerate}
point $x = 1$. Indeed, it is sufficient to use a standard technique and a localization argument as in \cite[Remark 4.6.2]{Alabau2006}. Of course, the situation is completely different in the case when the control acts at the {\it degenerate} point $x=0$. We refer to \cite{CMV2016} for a discussion of this issue. 
\end{remark}

\begin{remark}
Observe that, as in the context of parabolic equation without memory (i.e., $b=0$), the null controllability for \eqref{sys-memory1} proved in Theorem \ref{Thm_null_Control_memo_2}
yields the exact controllability to trajectories, that is, for any trajectory $\overline{y}$ (i.e. solution of \eqref{sys-memory1} corresponding to $u\equiv0$ and $y_0 \in L^2(0,1)$) and any $y_0 \in L^2(0,1)$, there exists $u\in L^2(Q)$ such that the associated   solution to \eqref{sys-memory1} satisfies 
$$ \overline{y}(T, x) = y(T, x), \qquad x\in (0, 1).$$

Indeed, let us consider a trajectory $\overline{y}$ and introduce the following change of variables $z=y - \overline{y}$, where $y$ is a solution of \eqref{sys-memory1}.
Hence, $z$ satisfies the following controlled system:
\begin{equation*}
\left\{
\begin{array}{lll}
\displaystyle z_t - (a(x)  z_x )_x = \int\limits_0^t b(t,s,x)  z(s,x) \, ds + 1_{\omega} u &  & (t, x) \in Q, \\
z(t,1)= 0, & & t \in (0, T),  \\
\begin{cases}
& z(t, 0) = 0,   \qquad   \quad (WD),\\
& (a z_x )  (t, 0)= 0 , \quad (SD), \\
\end{cases}  & & t \in (0, T),\\
z(0,x)= z_0(x),  & & x \in  (0, 1),
\end{array}
\right.
\end{equation*}
where $z_0=y_0 - \overline{y}_0$.

According to Theorem \ref{Thm_null_Control_memo_2} there exists $u \in L^2(Q)$ such that 
$$ z(T, x) = 0, \qquad x\in (0, 1). $$
Consequently,
$$ \overline{y}(T, x) = y(T, x), \qquad x\in (0, 1). $$
\end{remark}
\section{Comments}\label{sect_comments}
In this section we discuss some extensions of the above null controllability results and describe some perspectives that are related to this paper.
\subsection{Null controllability in the case $a(1)=0$}
In this subsection we address the null controllability result for the following degenerate parabolic equation with memory
\begin{equation}\label{sys-memory2}
\left\{
\begin{array}{lll}
\displaystyle y_t - (a(x)  y_x )_x = \int\limits_0^t b(t,s,x)  y(s,x) \, ds + 1_{\omega} u &  & (t, x) \in Q, \\
y(t,0)= 0, & & t \in (0, T),  \\
\begin{cases}
& y(t, 1) = 0,   \qquad   \quad (WD), \\
& (a y_x ) (t, 1)= 0 , \quad (SD), \\
\end{cases}  & & t \in (0, T),\\
y(0,x)= y_{0}(x),  & & x \in  (0, 1),
\end{array}
\right.
\end{equation}
where $y_0 \in L^2(0, 1)$ and $a$ degenerates at the extremity $x=1$, i.e.,
$a(1)=0$. 
In order to present our main result we need to introduce the functional spaces where our problem will be well posed. As before, we distinguish the two following cases:
\begin{itemize}
\item  Weakly degenerate case (WD)
\begin{equation}\label{hyp_WD_right}
\left\{
\begin{array}{lll}
a \in C([0,1]) \cap C^1([0,1)), \; a(1)=0, \;  a>0 \quad \text{in}\quad [0, 1), \\
\exists \; \tilde{\alpha} \in [0,1),\quad \text{such that}\quad (x-1) a'(x) \leq \tilde{\alpha} a(x),\quad \forall \; x \in [0,1],
\end{array}
\right.
\end{equation}
\item  Strongly degenerate (SD)
\begin{equation}\label{hyp_SD_right}
\left\{
\begin{array}{lll}
a \in C^1([0,1]), \; a(1)=0, \; a>0 \quad \text{in}\quad [0, 1), \\
\exists \, \tilde{\alpha} \in [1,2),\quad \text{such that}\quad (x-1) a'(x) \leq \tilde{\alpha} a(x),\quad \forall \, x \in [0,1],\\
\left\{
\begin{array}{ll}
\exists \, \tilde{\beta} \in (1, \tilde{\alpha}],\, x \mapsto \dfrac{a(x)}{(1 - x)^{\tilde{\beta}}}\quad \text{is nonincreasing near}\quad 0,\quad \text{if}\quad \tilde{\alpha} > 1, \\
\exists \, \tilde{\beta} \in (0, 1),\, x \mapsto \dfrac{a(x)}{(1 - x)^{\tilde{\beta}}}\quad \text{is nonincreasing near}\quad 0,\quad \text{if}\quad \tilde{\alpha} = 1.
\end{array}
\right.
\end{array}
\right.
\end{equation}
\end{itemize}
Clearly, the prototype is $ a(x) = (1 - x)^{\tilde{\alpha}}, \quad \tilde{\alpha} \in (0, 2). $

Let us introduce the weighted spaces $H_a^1$ and $H_a^2$ as follows:

Case (WD).
\begin{align*}
H_a^1:= \Big\{ y \in L^2(0,1): y \; \text{a.c. in} \; [0,1], \quad\sqrt{a}y_x \in L^2(0,1) \;\text{and}\, y(0)=y(1)=0 \Big\}
\end{align*}
and
\begin{align*}
H_a^2:= \Big\{ y \in H_a^1(0, 1): ay_x \in H^1(0,1)\Big\}.
\end{align*}
Case (SD).
\begin{align*}
H_a^1:= \Big\{ y \in L^2(0,1): y\; \text{ locally a.c. in}\;[0,1), \quad\sqrt{a}y_x \in L^2(0,1)\;\text{and}\; y(0)=0 \Big\}
\end{align*}
and
\begin{align*}
H_a^2:&= \Big\{ y \in H_a^1(0, 1): ay_x \in H^1(0,1)\Big\}\\
&=\Big\{ y \in L^2(0,1): y \; \text{locally a.c. in} \; [0,1), \quad ay\in H^1_0(0,1),\\
&\qquad ay_x \in H^1(0,1) \; \text{and} \; (ay_x)(1)=0 \Big\}.
\end{align*}

Using the above spaces, one can prove that the  well-posedness results given in Propositions \ref{prop-Well-posed_nonhom} and \ref{prop-Well-posed_memory} still  hold. On the contrary, setting $\varphi := \theta \tilde{\psi}$, where $\theta$ is defined as in \eqref{weightfunc_deg} and 
\begin{equation}\label{funzioni}
\tilde{\psi} := \tilde{\gamma} \Big(\int_{x}^{1}\frac{1- y}{a(y)}\,dy - \tilde{d} \Big),
\end{equation}
with $\tilde{\gamma}$ and $\displaystyle \tilde{d} > \int_{0}^{1}\frac{1- y}{a(y)}\,dy$ positive constants,  the next  null controllability result holds.
\begin{theorem}\label{Thm_null_Control_memo_3}
Let $T >0$, $k \geq 0$ and assume that
\begin{equation}\label{hypoth_kernel1}
(T-t)^{2k} e^{\big(\frac{4}{T}\big)^4\frac{ s\tilde \gamma \tilde d}{(T-t)^4}} b \in L^{\infty}((0, T) \times Q),
\end{equation}
with $s$ as in Theorem \ref{Thm_null_Control_memo_1}.
%for all $s \geq s_0$.
 Then, for any $y_0 \in L^2(0,1)$,
there exists $u \in L^2(Q)$ such that the associated solution $y \in W_T$ of \eqref{sys-memory2} satisfies
$$ \quad y(T, \cdot) = 0 \qquad \text{in} \; (0, 1). $$
\end{theorem}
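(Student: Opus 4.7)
The proof follows the scheme of Theorem \ref{Thm_null_Control_memo_2} with the roles of the endpoints $x=0$ and $x=1$ exchanged. One could in principle note that the change of variable $x\mapsto 1-x$ transforms the coefficient $a$ satisfying \eqref{hyp_WD_right}--\eqref{hyp_SD_right} into $\hat a(x):=a(1-x)$ satisfying \eqref{hyp_WD_left}--\eqref{hyp_SD_left}, maps the boundary conditions in \eqref{sys-memory2} onto those of \eqref{sys-memory1}, and sends $\omega\Subset(0,1)$ onto $\hat\omega:=1-\omega\Subset(0,1)$, so that Theorem \ref{Thm_null_Control_memo_3} would follow directly from Theorem \ref{Thm_null_Control_memo_2}. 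Nevertheless, it is cleaner to re-derive the estimates with weights adapted to the right endpoint, exactly mirroring Sections \ref{sect_well_posed}--\ref{sect_null_control_memory}.

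First I would verify that the well-posedness results analogous to Propositions \ref{prop-Well-posed_nonhom} and \ref{prop-Well-posed_memory} hold for \eqref{sys-memory2} in the modified spaces $H_a^1, H_a^2$ introduced just before the theorem; the arguments carry over verbatim. Next, setting $\varphi:=\theta\tilde\psi$ with $\tilde\psi$ as in \eqref{funzioni}, and $\eta:=\theta\Psi$ where $\Psi$ is built from a function $\rho\in C^2([0,1])$ vanishing at both endpoints with $\rho_x\neq 0$ off $\tilde\omega$, I would establish Carleman estimates analogous to Theorems \ref{thm-carleman-1} and \ref{thm-carleman-2} for the adjoint of the nonhomogeneous version of \eqref{sys-memory2}. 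The essential analytic ingredient is the right-endpoint Hardy-Poincar\'e inequality
\begin{equation*}
\int_0^1 \frac{a(x)}{(1-x)^2} y^2(x)\,dx \leq C_{HP}\int_0^1 a(x)\,y_x^2(x)\,dx,\qquad y\in H_a^1(0,1),
\end{equation*}
which is obtained from \eqref{harpoideginequality} through the same symmetry $x\mapsto 1-x$.

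With these tools, the modified Carleman estimate of Lemma \ref{Lemma_modif_Carl_estimate} and the null-controllability result of Theorem \ref{Thm_null_Control_nonhom} for the associated nonhomogeneous problem follow by literally the same proofs, now with the weights $\tilde\Phi:=\beta\tilde\psi$ and $\tilde\sigma:=\beta\Psi$. The analog of \eqref{caract_of_psi} reads $-\tilde\gamma\tilde d\,\beta\leq\tilde\Phi<0$, so that estimating the memory term $\int_0^t b(t,s,x)w(s,x)\,ds$ in the weighted $L^2$-norm with weight $(s\beta)^{-k/2}e^{-s\tilde\Phi}$ produces, as in \eqref{estim_integral_bw}, a factor $e^{2s\tilde\gamma\tilde d\,\beta}$. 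Using that $\beta(t)\leq(4/T)^4(T-t)^{-4}$ near $t=T$, this is precisely the quantity controlled by assumption \eqref{hypoth_kernel1}.

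Finally, I would run the Kakutani fixed point argument of Theorem \ref{Thm_null_Control_memo_1} to obtain controllability for $y_0\in H_a^1(0,1)$, and then pass from $H_a^1$ to $L^2$ initial data via the splitting trick used in the proof of Theorem \ref{Thm_null_Control_memo_2}: evolve freely on $[0,T/2]$ (which, by Proposition \ref{prop-Well-posed_memory}, produces a solution in $L^2(0,T/2;H_a^1)\cap C([0,T/2];L^2)$), pick $t^*\in(0,T/2)$ at which the trajectory lands in $H_a^1(0,1)$, then apply the $H_a^1$-controllability result on $[t^*,T]$ and concatenate. The main obstacle is merely bookkeeping: one must check that the parameters $\tilde\gamma, \tilde d$ can be chosen so as to simultaneously validate the right-endpoint analogs of \eqref{compar_max_min} and of the smallness condition on $s$ in Theorem \ref{Thm_null_Control_memo_1}; no new conceptual idea is required.
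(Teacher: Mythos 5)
Your proposal matches the paper's proof: the authors likewise reduce Theorem \ref{Thm_null_Control_memo_3} to a symmetric rerun of the machinery behind Theorem \ref{Thm_null_Control_memo_2}, replacing \eqref{harpoideginequality} and \eqref{Carl_estimate-1} by the right-endpoint Hardy--Poincar\'e inequality and the corresponding Carleman estimate with weight $\frac{(1-x)^2}{a(x)}$, exactly the two ingredients you identify. The approach and key steps are essentially identical, so no further comparison is needed.
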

\begin{proof}
The proof of this theorem follows the same strategy of Theorem \ref{Thm_null_Control_memo_2}; of course using symmetric arguments. The main difference is that here, in place of  \eqref{harpoideginequality} and \eqref{Carl_estimate-1}, we use the  following Hardy Poincar\'e inequality:

{\it there is a positive constant $C$ such that, for every $y\in H_a^1(0,1)$, the following inequality holds
\begin{equation*}
\int_0^1\frac{a(x)}{(1 - x)^2}y^2(x)\,dx\leq C\int_0^1a(x)|y_x(x)|^2\,dx,
\end{equation*}
}
and the following Carleman estimate:

{\it there exist two positive constants $C$ and $s_0$, such that the solution $v \in Z_T$ of \eqref{sys-adj-nonhom} satisfies
\begin{align*}
\int\!\!\!\!\!\int_{Q} \Big(s \theta  a(x) v_{x}^{2} & + s^{3} \theta^3 \frac{(1 -x)^2}{ a(x)} v^{2}\Big) e^{2s\varphi}\,dx\,dt \notag \\ &\leq C\Big(\int\!\!\!\!\!\int_{Q} g^2 e^{2s\varphi}\,dx\,dt  +  \int\!\!\!\!\!\int_{Q_{\omega}} s^2 \theta^2 v^2 e^{2s\varphi} \,dx dt\Big)
\end{align*}
for all $s \ge s_0$.}

As the procedure is completely similar, we omit the details of the proof.
\end{proof}

\subsection{Null controllability in the case  $a(0)=a(1)=0$}
In this subsection we will extend the null controllability result proved above to the degenerate parabolic equation with memory
\begin{equation}\label{sys-memory3}
\left\{
\begin{array}{lll}
\displaystyle y_t - (a(x)  y_x )_x = \int\limits_0^t b(t,s,x)  y(s,x) \, ds + 1_{\omega} u &  & (t, x) \in Q, \\
\begin{cases}
& y(t, 0) = 0 = y(t, 1),   \qquad   \qquad (WWD), \\
& (a y_x ) (t, 0) = 0 = y(t, 1) , \qquad (SWD), \\
& y(t, 0) = 0 = (a y_x ) (t, 1) , \qquad (WSD), \\
& (a y_x ) (t, 0) = 0 = (a y_x ) (t, 1) ,  \quad (SSD), \\
\end{cases}  & & t \in (0, T),\\
y(0,x)= y_{0}(x),  & & x \in  (0, 1),
\end{array}
\right.
\end{equation}
where $y_0 \in L^2(0, 1)$ and $a$ vanishes at both extremities of the interval $(0, 1)$ and satisfies, as in \cite{MV2006}, one of the four following cases:
\begin{itemize}
\item weakly-weakly degenerate case (WWD): 
\begin{equation*}
\left\{
\begin{array}{lll}
a \in C([0,1]) \cap C^1((0,1)), \; a(0)=a(1)=0, \; a>0 \quad \text{in}\quad (0, 1), \\
\exists \, \alpha \in [0,1),\quad \text{such that}\quad x a'(x) \leq \alpha a(x),\quad \forall \, x \in [0,1],\\
\exists \; \tilde{\alpha} \in [0,1),\quad \text{such that}\quad (x-1) a'(x) \leq \tilde{\alpha} a(x),\quad \forall \; x \in [0,1],
\end{array}
\right.
\end{equation*}
% $a$ satisfies the hypotheses \eqref{hyp_WD_left} and \eqref{hyp_WD_right}.
\item strongly-weakly degenerate case (SWD):
\begin{equation*}
\left\{
\begin{array}{lll}
a \in C([0,1]) \cap C^1([0,1)), \; a(0)=a(1)=0, \; a>0 \quad \text{in}\quad (0, 1), \\
\exists \, \alpha \in [1,2),\quad \text{such that}\quad x a'(x) \leq \alpha a(x),\quad \forall \, x \in [0,1],\\
\left\{
\begin{array}{ll}
\exists \, \beta \in (1, \alpha],\, x \mapsto \dfrac{a(x)}{x^{\beta}}\quad \text{is nondecreasing near}\quad 0,\quad \text{if}\quad \alpha > 1, \\
\exists \, \beta \in (0, 1),\, x \mapsto \dfrac{a(x)}{x^{\beta}}\quad \text{is nondecreasing near}\quad 0,\quad \text{if}\quad \alpha=1,\\
\end{array}
\right.\\
\exists \; \tilde{\alpha} \in [0,1),\quad \text{such that}\quad (x-1) a'(x) \leq \tilde{\alpha} a(x),\quad \forall \; x \in [0,1],
\end{array}
\right.
\end{equation*}
% $a$ satisfies the hypotheses \eqref{hyp_SD_left} and \eqref{hyp_WD_right}.
\item weakly-strongly degenerate case (WSD): 
\begin{equation*}
\left\{
\begin{array}{lll}
a \in C([0,1]) \cap C^1((0,1]), \; a(0)=a(1)=0, \; a>0 \quad \text{in}\quad (0, 1), \\
\exists \, \alpha \in [0,1),\quad \text{such that}\quad x a'(x) \leq \alpha a(x),\quad \forall \, x \in [0,1],\\
\exists \, \tilde{\alpha} \in [1,2),\quad \text{such that}\quad (x-1) a'(x) \leq \tilde{\alpha} a(x),\quad \forall \, x \in [0,1],\\
\left\{
\begin{array}{ll}
\exists \, \tilde{\beta} \in (1, \tilde{\alpha}],\, x \mapsto \dfrac{a(x)}{(1 - x)^{\tilde{\beta}}}\quad \text{is nonincreasing near}\quad 0,\quad \text{if}\quad \tilde{\alpha} > 1, \\
\exists \, \tilde{\beta} \in (0, 1),\, x \mapsto \dfrac{a(x)}{(1 - x)^{\tilde{\beta}}}\quad \text{is nonincreasing near}\quad 0,\quad \text{if}\quad \tilde{\alpha} = 1.
\end{array}
\right.
\end{array}
\right.
\end{equation*}
% $a$ satisfies the hypotheses \eqref{hyp_WD_left} and \eqref{hyp_SD_right}.
\item strongly-strongly degenerate case (SSD): 
\begin{equation*}
\left\{
\begin{array}{lll}
a \in C^1([0,1]), \; a(0)=a(1)=0, \; a>0 \quad \text{in}\quad (0, 1), \\
\exists \, \alpha \in [1,2),\quad \text{such that}\quad x a'(x) \leq \alpha a(x),\quad \forall \, x \in [0,1],\\
\left\{
\begin{array}{ll}
\exists \, \beta \in (1, \alpha],\, x \mapsto \dfrac{a(x)}{x^{\beta}}\quad \text{is nondecreasing near}\quad 0,\quad \text{if}\quad \alpha > 1, \\
\exists \, \beta \in (0, 1),\, x \mapsto \dfrac{a(x)}{x^{\beta}}\quad \text{is nondecreasing near}\quad 0,\quad \text{if}\quad \alpha=1,\\
\end{array}
\right.\\
\exists \, \tilde{\alpha} \in [1,2),\quad \text{such that}\quad (x-1) a'(x) \leq \tilde{\alpha} a(x),\quad \forall \, x \in [0,1],\\
\left\{
\begin{array}{ll}
\exists \, \tilde{\beta} \in (1, \tilde{\alpha}],\, x \mapsto \dfrac{a(x)}{(1 - x)^{\tilde{\beta}}}\quad \text{is nonincreasing near}\quad 0,\quad \text{if}\quad \tilde{\alpha} > 1, \\
\exists \, \tilde{\beta} \in (0, 1),\, x \mapsto \dfrac{a(x)}{(1 - x)^{\tilde{\beta}}}\quad \text{is nonincreasing near}\quad 0,\quad \text{if}\quad \tilde{\alpha} = 1.
\end{array}
\right.
\end{array}
\right.
\end{equation*}
% $a$ satisfies the hypotheses \eqref{hyp_SD_left} and \eqref{hyp_SD_right}.
\end{itemize}
A typical example is $ a(x) = x^{\alpha} (1 - x)^{\tilde{\alpha}}, \quad \text{with} \; \alpha, \tilde{\alpha} \in [0,2)$.

As previously, in order to study the well-posedness of problem \eqref{sys-memory3}, we shall define four different classes of weighted spaces.

Case (WWD).
\begin{align*}
H_a^1:= \Big\{ y \in L^2(0,1): y\; \text{a.c. in}\, [0,1],
\quad\sqrt{a}y_x \in L^2(0,1) \; \text{and} \; y(0)=y(1)=0 \Big\}
\end{align*}
and
\begin{align*}
H_a^2:= \Big\{ y \in H_a^1(0, 1): ay_x \in H^1(0,1)\Big\}.
\end{align*}
Case (SWD).
\begin{align*}
H_a^1:= \Big\{ y \in L^2(0,1): y\; \text{locally a.c. in}\;(0,1], \quad\sqrt{a}y_x \in L^2(0,1)\;\text{and}\; y(1)=0 \Big\}
\end{align*}
and
\begin{align*}
H_a^2:&= \Big\{ y \in H_a^1(0, 1): ay_x \in H^1(0,1)\Big\}\\
&=\Big\{ y \in L^2(0,1): y \; \text{locally a.c. in} \; (0,1], \quad ay\in H^1_0(0,1),\\
&\qquad ay_x \in H^1(0,1) \; \text{and} \; (ay_x)(0)=0 \Big\}.
\end{align*}
Case (WSD).
\begin{align*}
H_a^1:= \Big\{ y \in L^2(0,1): y\; \text{locally a.c. in}\;[0,1), \quad\sqrt{a}y_x \in L^2(0,1)\;\text{and}\; y(0)=0 \Big\}
\end{align*}
and
\begin{align*}
H_a^2:&= \Big\{ y \in H_a^1(0, 1): ay_x \in H^1(0,1)\Big\}\\
&=\Big\{ y \in L^2(0,1): y \; \text{locally a.c. in} \; [0,1), \quad ay\in H^1_0(0,1),\\
&\qquad ay_x \in H^1(0,1) \; \text{and} \; (ay_x)(1)=0 \Big\}.
\end{align*}
Case (SSD).
\begin{align*}
H_a^1:= \Big\{ y \in L^2(0,1): y \; \text{locally a.c. in}\;(0,1), \quad\sqrt{a}y_x \in L^2(0,1) \Big\}
\end{align*}
and
\begin{align*}
H_a^2:&= \Big\{ y \in H_a^1(0, 1): ay_x \in H^1(0,1)\Big\}\\
&=\Big\{ y \in L^2(0,1): y \; \text{locally a.c. in} \; (0,1), \quad ay\in H^1_0(0,1),\\
&\qquad ay_x \in H^1(0,1) \; \text{and} \; (ay_x)(0)= (ay_x)(1)=0 \Big\}.
\end{align*}
Again, the well-posedness results proved in Propositions \ref{prop-Well-posed_nonhom} and \ref{prop-Well-posed_memory} still hold and, as a consequence of Theorems \ref{Thm_null_Control_memo_2} and \ref{Thm_null_Control_memo_3}, one can deduce the following null controllability result for \eqref{sys-memory3}.
\begin{theorem}\label{Thm_null_Control_memo_4}
 Let $T >0$, $k \geq 0$ and assume 
%, for all $s \geq s_0,$
\begin{equation}\label{hypoth_kernel2}
(T-t)^{2k} e^{\big(\frac{4}{T}\big)^4 \frac{ s\bar\gamma \bar d}{(T-t)^4}} b \in L^{\infty}((0, T) \times Q),
\end{equation}
with $s$ as in Theorem \ref{Thm_null_Control_memo_1}, where  $\bar \gamma = \max\{\gamma, \tilde \gamma\}, \bar d = \max \{d, \tilde d\}$. Then, for any $y_0 \in L^2(0,1)$,
there exists $u \in L^2(Q)$ such that the associated solution $y \in W_T$ of \eqref{sys-memory3} satisfies
$$ \quad y(T, \cdot) = 0 \qquad \text{in} \; (0, 1). $$
Here $\gamma, \tilde \gamma, d$ and $\tilde d$ are the constants given in \eqref{weightfunc_deg} and in \eqref{funzioni}.
\end{theorem}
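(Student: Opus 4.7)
The plan is to mimic step by step the strategy that led to Theorems \ref{Thm_null_Control_memo_2} and \ref{Thm_null_Control_memo_3}, replacing each one-sided ingredient by a two-sided counterpart. The key geometric fact is that $\omega \Subset (0,1)$, so one can pick an interior point $x_0 \in \omega$ and open subintervals $\omega_\ell \Subset \omega \cap (0, x_0)$, $\omega_r \Subset \omega \cap (x_0, 1)$. Well-posedness of \eqref{sys-memory3} in $W_T$, and in $Z_T$ when $y_0 \in H_a^1(0,1)$, follows along the lines of Propositions \ref{prop-Well-posed_nonhom} and \ref{prop-Well-posed_memory} in each of the four weighted-space settings introduced above, so that step is not the real difficulty.

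First I would set up the two-sided spatial weight
\[
\psi(x) := \bar\gamma \Bigl( \int_0^x \frac{y}{a(y)}\,dy + \int_x^1 \frac{1-y}{a(y)}\,dy - \bar d\Bigr),
\]
with $\bar\gamma, \bar d$ as in the statement and $\bar d$ large enough so that $\psi < 0$ on $[0,1]$; keep the time weight $\theta$ of \eqref{weightfunc_deg} and set $\varphi = \theta \psi$. Using a companion $\rho \in C^2([0,1])$ vanishing at both endpoints with $\rho_x \neq 0$ outside a small $\tilde\omega \subset \omega_\ell \cup \omega_r$, define $\Psi$ and $\eta$ as in \eqref{weightfunc_nondeg}. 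The target is the two-sided Carleman estimate
\begin{equation*}
\int\!\!\!\!\!\int_Q \Big( s\theta a\, v_x^2 + s^3\theta^3\Big(\tfrac{x^2}{a}+\tfrac{(1-x)^2}{a}\Big) v^2\Big) e^{2s\varphi}\,dx\,dt \le C\Big(\int\!\!\!\!\!\int_Q g^2 e^{2s\eta}\,dx\,dt + \int\!\!\!\!\!\int_{Q_\omega} s^3\theta^3 v^2 e^{2s\eta}\,dx\,dt\Big)
\end{equation*}
for the adjoint of \eqref{sys-memory3}, which I would obtain by splitting with a smooth cut-off $\chi$ supported in $[0, x_0]$ and equal to $1$ near $0$: on $\chi v$ the left-degenerate estimate of Theorem \ref{thm-carleman-1} applies; on $(1-\chi) v$ the symmetric right-degenerate version used in Theorem \ref{Thm_null_Control_memo_3} applies. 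Summing and invoking $\chi^2 + (1-\chi)^2 \ge 1/2$, exactly as in the proof of Theorem \ref{thm-carleman-2}, yields the combined estimate. The two-sided Hardy--Poincar\'e inequality needed to absorb the lower-order $v^2$ terms is obtained by applying Proposition \ref{prop_hardy} on $[0,1/2]$ and its mirror on $[1/2,1]$.

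With the Carleman estimate in hand, the modified version (analogue of Lemma \ref{Lemma_modif_Carl_estimate}) is produced verbatim, now with $\bar\gamma$ and $\bar d$ entering $\widehat\Phi$ and $\Phi^\ast$; this is precisely why hypothesis \eqref{hypoth_kernel2} is written in terms of these constants. The Lax--Milgram / duality scheme of Theorem \ref{Thm_null_Control_nonhom} then gives null controllability and the weighted bound \eqref{estim_solut_control} for the nonhomogeneous equation. Plugging $f(t,x) = \int_0^t b(t,\tau,x) w(\tau,x)\,d\tau$ for $w \in E_{s,k,R}$ and exploiting \eqref{hypoth_kernel2} as in the proof of Proposition \ref{Prop_nul_control_sys_w} yields a control sending the linearized memory problem to rest at $t=T$. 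The Kakutani fixed-point argument of Theorem \ref{Thm_null_Control_memo_1} then applies unchanged: convexity and closedness of the image are immediate, relative compactness follows from Aubin--Lions, and the self-mapping of $E_{s,k,R}$ is secured by taking $R$ large and $s$ as prescribed. Finally, the free-evolution plus smoothing on $[0, t^\ast]$ of Theorem \ref{Thm_null_Control_memo_2} extends the result from $y_0 \in H_a^1(0,1)$ to $y_0 \in L^2(0,1)$.

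The main obstacle I expect is the two-sided Carleman estimate itself: $\bar\gamma$ must be chosen large enough that the two one-sided analogues of \eqref{condition_d_gamma} hold simultaneously at $x=0$ and $x=1$, and the cut-off / gluing step must be executed without generating uncontrolled interface terms at $x=x_0$, which is exactly the point where we rely critically on $\omega$ containing a full neighborhood of $x_0$. Once these constructions are in place, the remainder of the proof is a verbatim transcription of Sections \ref{sect_carleman_estimate}--\ref{sect_null_control_memory}, discriminating among the four sub-cases (WWD), (SWD), (WSD), (SSD) only through the choice of the weighted Sobolev framework.
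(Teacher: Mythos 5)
Your proposal is workable in spirit but takes a genuinely different, and much heavier, route than the paper. The paper does \emph{not} prove any new two-sided Carleman estimate for \eqref{sys-memory3}: it works directly at the level of the controlled trajectories. It solves the left-degenerate memory control problem of Theorem \ref{Thm_null_Control_memo_2} on $(0,T)\times(0,\beta')$ and the right-degenerate one of Theorem \ref{Thm_null_Control_memo_3} on $(0,T)\times(\lambda',1)$, extends both controlled solutions by zero, and glues them as $y=\chi\tilde w+(1-\chi)\tilde z$ with a spatial cut-off $\chi$ whose derivatives are supported in $(\lambda'',\beta'')\Subset\omega$. The commutator terms $(a\tilde w)_x\chi_x+a\tilde w\chi_{xx}+a\tilde w_x\chi_x$ (and the analogous ones for $\tilde z$) are then simply absorbed into the control, since they live inside $\omega$; the memory term splits cleanly because $\chi$ is independent of $t$, so $\chi\int_0^t b\,\tilde w\,ds+(1-\chi)\int_0^t b\,\tilde z\,ds=\int_0^t b\,y\,ds$. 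This buys a proof of a few lines once Theorems \ref{Thm_null_Control_memo_2} and \ref{Thm_null_Control_memo_3} are available, whereas your plan rebuilds the entire pipeline (Carleman estimate, modified Carleman estimate, Lax--Milgram scheme, Kakutani argument) from scratch.

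Beyond the efficiency issue, one concrete ingredient of your plan is flawed: the global weight
$\psi(x)=\bar\gamma\bigl(\int_0^x \frac{y}{a(y)}\,dy+\int_x^1\frac{1-y}{a(y)}\,dy-\bar d\bigr)$
is not usable in the strongly degenerate cases. If $a$ degenerates at $x=0$ with $\alpha\ge 1$ (cases (SWD), (SSD)), then $\int_x^1\frac{1-y}{a(y)}\,dy\to+\infty$ as $x\to 0^+$, so $\psi$ cannot be made negative and bounded on $[0,1]$ by any choice of $\bar d$; the symmetric problem occurs at $x=1$ in cases (WSD), (SSD). A two-sided weight must instead be built piecewise (each one-sided primitive used only near its own endpoint, matched inside $\omega$), which is consistent with your cut-off strategy for the estimate itself but contradicts the additive formula you wrote down. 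If you intend to pursue the Carleman route, you should drop the additive weight and carry the two one-sided weights $\psi$ and $\tilde\psi$ separately through the localization, exactly as in the splitting used in the proof of Theorem \ref{thm-carleman-2}; this is also why the hypothesis \eqref{hypoth_kernel2} is stated with $\bar\gamma=\max\{\gamma,\tilde\gamma\}$ and $\bar d=\max\{d,\tilde d\}$ rather than with a single new pair of constants.
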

 \begin{proof}
Consider the following parabolic system
 \begin{equation}\label{P1}
\left\{
\begin{array}{lll}
\displaystyle w_t - (a(x)  w_x )_x = \int\limits_0^t b(t,s,x)  w(s,x) \, ds + 1_{\omega} u_1 &  & (t, x) \in (0, T) \times (0, \beta'), \\
w(t,\beta')= 0, & & t \in (0, T),  \\
\begin{cases}
& w(t, 0) = 0,   \qquad   \quad (WD), \\
& (a w_x )  (t, 0)= 0 , \quad (SD), \\
\end{cases}  & & t \in (0, T),\\
w(0,x)= y_{0}(x),  & & x \in (0, \beta'),
\end{array}
\right.
\end{equation}
where $\omega \Subset (\lambda', \beta') \Subset (0, 1)$ and $y_0$ is the initial condition in \eqref{sys-memory3}.

Thus, by Theorem \ref{Thm_null_Control_memo_2}, we know that there exists a control $u_1 \in L^2((0, T) \times (0, \beta')) $ such that the associated solution $w \in W_T$ of \eqref{P1} satisfies
$$ w(T, \cdot) =0, \quad \text{in} \, (0, \beta').$$
Now, define $\tilde{w}$ the trivial extension of $w$ in $[0, 1]$. Hence
$$ \tilde{w}(T, \cdot) =0, \quad \text{in} \, (0, 1).$$
In a similar way, we consider the following parabolic system
 \begin{equation}\label{P2}
\left\{
\begin{array}{lll}
\displaystyle z_t - (a(x)  z_x )_x = \int\limits_0^t b(t,s,x)  z(s,x) \, ds + 1_{\omega} u_2 &  & (t, x) \in (0, T) \times (\lambda' , 1), \\
z(t,\lambda')= 0, & & t \in (0, T),  \\
\begin{cases}
& z(t, 1) = 0,   \qquad   \quad (WD), \\
& (a z_x )  (t, 1)= 0 , \quad (SD), \\
\end{cases}  & & t \in (0, T),\\
z(0,x)= y_{0}(x),  & & x \in (\lambda' , 1).
\end{array}
\right.
\end{equation}
Then, thanks to Theorem \ref{Thm_null_Control_memo_3}, there exists a control $u_2 \in L^2((0, T) \times (\lambda' , 1)) $ such that the associated solution $z \in W_T$ solution of \eqref{P2} satisfies
$$ z(T, \cdot) = 0, \quad \text{in} \, (\lambda' , 1).$$
Now, define $\tilde{z}$ the trivial extension of $z$ in $[0, 1]$. Hence
$$ \tilde{z}(T, \cdot) = 0, \quad \text{in} \, (0, 1).$$
Next, consider
\begin{equation*}
\tilde{u}_1(t,x)=  \left\{
\begin{array}{ll}
\displaystyle
u_1 (t,x), & (t,x) \in (0, T) \times (0, \beta'), \\
0, & (t,x) \in (0, T) \times (\beta', 1),
\end{array}
\right.
\end{equation*}
\text{and} 
\begin{equation*}
\tilde{u}_2 (t,x)=  \left\{
\begin{array}{ll}
\displaystyle
0, & (t,x) \in (0, T) \times (0, \lambda'), \\
u_2 (t,x), & (t,x) \in (0, T) \times (\lambda', 1).
\end{array}
\right.
\end{equation*}
Let $\chi \in C^\infty([0, 1])$ be a smooth cut-off function
 such that
\begin{equation}\label{xi}
0\leq \chi(x) \leq 1 , \quad
\chi(x)=  \left\{
\begin{array}{ll}
\displaystyle
1, & x \in (0, \lambda''), \\
0, & x \in (\beta'', 1),
\end{array}
\right.
\end{equation}
where $(\lambda'', \beta'') \Subset \omega$
and set $ y = \chi \tilde{w} + (1 - \chi) \tilde{z}$.

Then, one can easily verifies that
\begin{equation*}
y_t = \chi \tilde{w}_t + (1 - \chi) \tilde{z}_t,
\end{equation*}
and
\begin{align*}
 (a y_x)_x = & \chi (a \tilde{w}_x )_x + (1 - \chi) (a \tilde{z}_x )_x +
((a \tilde{w})_x \chi_x + a \tilde{w} \chi_{xx} +  a \tilde{w}_x \chi_x )  \\
& - ((a \tilde{z})_x \chi_x + a \tilde{z} \chi_{xx} +  a \tilde{z}_x \chi_x ).
\end{align*}
Therefore, we find that
\begin{align*}
y_t - (a y_x)_x - & \int\limits_0^t b(t,s,x)  y(s,x) \, ds = \chi \Big(\tilde{w}_t - (a \tilde{w}_x )_x - \int\limits_0^t b(t,s,x)  \tilde{w}(s,x) \, ds \Big) \\
& + (1 - \chi) \Big(\tilde{z}_t - (a \tilde{z}_x )_x - \int\limits_0^t b(t,s,x) \tilde{z}(s,x) \, ds \Big) \\ 
& - \Big((a \tilde{w})_x \chi_x + a \tilde{w} \chi_{xx} +  a \tilde{w}_x \chi_x \Big)
+ \Big((a \tilde{z})_x \chi_x + a \tilde{z} \chi_{xx} +  a \tilde{z}_x \chi_x \Big)\\
& = 1_{\omega} \chi u_1 + 1_{\omega} (1 - \chi) u_2 - \Big((a \tilde{w})_x \chi_x + a \tilde{w} \chi_{xx} +  a \tilde{w}_x \chi_x \Big) \\
& \quad + \Big((a \tilde{z})_x \chi_x + a \tilde{z} \chi_{xx} +  a \tilde{z}_x \chi_x \Big).
\end{align*}
Observe that the supports of $\chi_x$ and $\chi_{xx}$ are contained in $(\lambda'', \beta'') \Subset \omega$. Then, we can write
\begin{align*}
y_t - (a y_x)_x = \int\limits_0^t b(t,s,x)  y(s,x) \, ds + 1_{\omega} u
\end{align*}
where $ u \in L^2(Q)$ satisfies
\begin{align*}
1_{\omega} u & = 1_{\omega} \chi \tilde{u}_1 + 1_{\omega} (1 - \chi) \tilde{u}_2 - \Big((a \tilde{w})_x \chi_x + a \tilde{w} \chi_{xx} +  a \tilde{w}_x \chi_x \Big) \\
& \quad + \Big((a \tilde{z})_x \chi_x + a \tilde{z} \chi_{xx} +  a \tilde{z}_x \chi_x \Big).
\end{align*}
Moreover, using the definitions of $\tilde{w}$, $\tilde{z}$ and $\chi$, it follows that
\begin{align*}
& y(t, 0) = \Big(\chi \tilde{w} + (1 - \chi) \tilde{z} \Big)(t,0) = 0, \qquad t\in (0, T), \\
& y(t, 1) = \Big(\chi \tilde{w} + (1 - \chi) \tilde{z} \Big)(t,1) = 0, \qquad t\in (0, T),\\
& (a y)_x(t, 0) = \Big( \chi_x a \tilde{w} + \chi (a \tilde{w}_x) - \chi_x a \tilde{z} + (1 - \chi) (a \tilde{z}_x)\Big) (t,0) = 0,\qquad t\in (0, T),\\
& (a y)_x(t, 1) =\Big( \chi_x a \tilde{w} + \chi (a \tilde{w}_x) - \chi_x a \tilde{z} + (1 - \chi) (a \tilde{z}_x)\Big) (t,1) = 0, \qquad t\in (0, T),
\end{align*}
from which we get the boundary conditions given in \eqref{sys-memory3}.

In addition, we have
\begin{align*}
y(0, x) & = \chi(x) \tilde{w}(0, x) + (1 - \chi(x)) \tilde{z}(0, x) \\
& = \chi(x) y_0(x) + (1 - \chi(x))y_0(x) = y_0(x), \qquad x\in (0, 1).
\end{align*}
In conclusion, $y$ solves the memory system \eqref{sys-memory3}, and satisfies
$$ y(T, \cdot) = \chi \tilde{w}(T, \cdot) + (1 - \chi) \tilde{z}(T, \cdot) = 0 \qquad \text{in} \, (0, 1). $$
Hence the claim follows.
\end{proof}
{\bf Final comments.}
In the context of parabolic equation without memory, i.e., $b=0$, it is well known that, once there exists a control function acting on a control region $\omega \subset (0, 1)$ that drives the system from an initial state $y_0$ to the equilibrium at time $t=T$, i.e., $y(T, \cdot) = 0$, we can stop controlling, by setting $u\equiv 0$ for $t\geq T$, and the underlying system naturally stays at rest for all $t\geq T$, i.e.,
$$ y(t, \cdot) = 0, \qquad  \forall \, t\geq T. $$
Unfortunately, this is not the case for the parabolic equation with memory. Indeed, due to the effect of the accumulated memory at time $t=T$, i.e., $\displaystyle\int\limits_0^T b(T,s,\cdot) y(s, \cdot) \, ds$, the null state of this system at $T$ cannot be kept for $t \geq T$ in the absence of control function.

Hence, it could be of interest to consider a more general concept of null controllability for system of type \eqref{sys-memory1}. In particular, we look for a control function that drives both the state and the memory term to $0$ at time $t=T$. 

This problem has been addressed by S. Ivanov and L. Pandolfi  in \cite{Ivanov2009} for the parabolic equation with memory and through a distributed control:
$$ y_t -  y_{xx} = \int_0^t b(t-s) y_{xx}(s)  ds + 1_{\omega} u, \qquad (t,x) \in Q. $$
In \cite{Ivanov2009} it is proved that, this system cannot be controlled
to rest for large classes of memory kernels and controls. In fact, the presence of the memory terms makes the controllability of this system to be impossible if the control is located in a fixed subset $\omega$. 

On the other hand, to obtain controllability result as explained in  \cite{Chaves2017} and \cite{Chaves2014}, the support of the control function needs to move to cover the domain where the equation evolves in the control time horizon. We refer to \cite{Chaves2017} where this problem is discussed in the context of heat equation. The extension to the degenerate problem is the subject of a future work. 
%%%%%%%%%%%%%%%%%%%%%%%%%%%%%%%%%%%%%%%%%%%%%%%%%%%%%%%%%%%%%%%%%%%%%%%%%%%%%%%%%%%%%%%%%%%%%%%%%%%%%%%%%%


\begin{thebibliography}{99}
\bibitem{Hajjaj2013} 
E. M. Ait Ben Hassi, F. Ammar Khodja, A. Hajjaj and L. Maniar, {\it Carleman estimates and null controllability of coupled degenerate systems}, Evol. Equ. Control Theory, 2 (2013), 441–459.
%======================================
\bibitem{Alabau2006} 
F. Alabau-Boussouira, P. Cannarsa, G. Fragnelli, {\it Carleman estimates for degenerate parabolic operators with application to null controllability},  J. Evol. Equ. 6 (2006), 161-204.
%======================================
\bibitem{AHMS19} 
B. Allal, A. Hajjaj, L. Maniar, J. Salhi,
{\it Lipschitz stability for some coupled degenerate parabolic systems with locally distributed observations of one component}, Math. Control \& Rela. Fields, doi: 10.3934/mcrf.2020014.
%======================================
\bibitem{Barbu2000} V. Barbu, M. Iannelli, {\it Controllability of the heat equation with memory}, Differential Integral Equations 13 (2000) 1393–1412.
%======================================
\bibitem{Campiti1998} M. Campiti, G. Metafune and D. Pallara,
{\it Degenerate self-adjoint evolution equations on the unit interval}, Semigroup Forum, 57 (1998), 1-36.
%======================================
\bibitem{bfm2018}
I. Boutaayamou, G. Fragnelli, L. Maniar, {\it  Carleman estimates for parabolic equations with interior degeneracy and Neumann boundary conditions},  J. Anal. Math., \textbf{135} (2018), 1--35.
%=====================================
\bibitem{CMV2008}
P. Cannarsa, P. Martinez and J. Vancostenoble,
{\it Carleman estimates for a class of degenerate parabolic operators},
SIAM J. Control Optim. 47 (2008), 1-19.
%======================================
\bibitem{Carmelo2000} J. Carmelo Flores, Luz De Teresa, {\it Null controllability of one dimensional degenerate parabolic equations with first order terms}, Discrete \& Continuous Dynamical Systems-B, (2020)
%======================================
\bibitem{CMV2016}
P. Cannarsa, P. Martinez and J. Vancostenoble, {\it Global Carleman estimates for degenerate parabolic operators with applications}, Mem. Amer. Math. Soc. 239 (2016), ix+209 pp.
%======================================
\bibitem{Chaves2017} F. Chaves-Silva, X. Zhang and E. Zuazua, {\it Controllability of evolution equations with memory, SIAM Journal on Control and Optimization}, 55(2017), 2437–2459, doi: 10.1137/151004239.
%=====================================
\bibitem{Chaves2014} F. W. Chaves-Silva, L. Rosier and E. Zuazua. {\it Null controllability of a system of viscoelasticity with a moving control}, J. Math. Pures Appl. 101 (2014), 198-222.
%=====================================
\bibitem{Fadili} M. Fadili and L. Maniar, {\it Null controllability of $n$-coupled degenerate parabolic systems with $m$-controls}, J. Evol. Equ., 17 (2017), 1311-1340.
%======================================
\bibitem{fm2013}
G. Fragnelli, D. Mugnai,  Carleman estimates and observability
inequalities for parabolic equations with interior degeneracy,
{\it Advances in Nonlinear Analysis }\textbf{2} (2013), 339--378.
%======================================
\bibitem{fm2016}
G. Fragnelli, D. Mugnai, Carleman estimates, observability inequalities
and null controllability for interior degenerate
non smooth parabolic equations,
{\it Mem. Amer. Math. Soc.} \textbf{242} (2016), v+84 pp. {\it Corrigendum}, to appear.
%======================================
\bibitem{FI1996}
 A. V. Fursikov and O. Y. Imanuvilov, {\it
 Controllability of evolution equations},
 \emph{Lect. Notes Ser. 34, Seoul National University,
Seoul, 1996.}
%======================================
\bibitem{Grasselli1991} 
M. Grasselli and A. Lorenzi, {\it Abstract nonlinear Volterra integro-differential equations with nonsmooth kernels}, Atti. Accad. Naz. Lincei Cl. Sci. Fis. Mat. Natur. Rend. Lincei (9) Mat. Appl., 2 (1991), 43–53
%======================================
\bibitem{Guerrero2013}
S. Guerrero and O. Yu. Imanuvilov,  {\it Remarks on non controllability of the heat equation with memory}, ESAIM Control Optim. Calc. Var. 19 (2013), 288-300.
%======================================
\bibitem{Ivanov2009} 
S. Ivanov and L. Pandolfi,  {\it Heat equation with memory: Lack of controllability to rest}. J. Math. Anal. Appl. 355 (2009) 1–11.
%======================================
\bibitem{Lavanya2009} 
R. Lavanya, K. Balachandran, {\it Null controllability of nonlinear heat equations with memory effects}, Nonlinear Anal. Hybrid Syst. 3 (2009) 163–175.
%======================================
\bibitem{Lions71}
J.L. Lions,  {\it Optimal control of systems governed by partial differential equations}, Springer-Verlag, 1971.
%======================================
\bibitem{Lions83}
J.L. Lions,  {\it Contr\^ole des syst\`emes distribu\'es singuliers}, Gauthier-Villars,
Paris, 1983.
%======================================
\bibitem{Zuazua2017} Q. L\"{u}, X. Zhang, E. Zuazua, {\it Null controllability for wave equations with memory}, J. Math. Pures Appl. (9) 108 (2017), no. 4, 500-531.


\bibitem{MV2006}
P. Martinez, J. Vancostenoble,  {\it Carleman estimates for one-dimensional
degenerate heat equations}, J. Evol. Eq. 6 (2006), 325–362.
%======================================
\bibitem{Munoz2003} J.E. Mu\~{n}oz Rivera, M.G. Naso, {\it Exact boundary controllability in thermoelasticity with memory}, Adv. Difference Equ. 8 (2003) 471–490.
%%======================================
%\bibitem{Pandolfi2005} 
%L. Pandolfi, {\it The controllability of the Gurtin–Pipkin equation: A cosine operator approach}, Appl. Math. Optim. 52 (2005) 143–165.
%======================================
\bibitem{Saktiv2008}
K. Sakthivel, K. Balachandran, B.R. Nagaraj, {\it On a class of non-linear parabolic control systems with memory effects},
Internat. J. Control 81 (2008) 764-777.
%======================================
\bibitem{TaoGao16}
Q. Tao and H. Gao, {\it On the null controllability of heat equation with memory}, J. Math. Anal. Appl. 440 (2016) 1-13.
%======================================
%\bibitem{TaoGao14} Q. Tao, H. Gao, B. Zhang and Z.A. Yao, {\it Approximate controllability of a parabolic integrodifferential equation}. Mathematical Methods in the Applied Sciences, 37 (2014), 2236–2244.
%======================================
\bibitem{Yong2005} J. Yong and X. Zhang, {\it Exact controllability of the heat equation with hyperbolic memory kernel}, in: Control Theory of Partial Differential Equations, in: Lect. Notes Pure Appl. Math., vol. 242, Chapman $\&$ Hall/CRC, Boca Raton, FL, 2005, pp. 387–401.
%======================================
\bibitem{Zhou2014} 
X. Zhou and H. Gao, {\it Interior approximate and null controllability of the heat equation with memory}, Comput. Math. Appl. 67 (2014), 602-613.
%======================================
\bibitem{Zhou2018} 
X. Zhou and M. Zhang, {\it on the controllability of a class of degenerate parabolic equations with memory}, J Dyn Control Syst 24, 577–591 (2018).  https://doi.org/10.1007/s10883-017-9382-7.
%======================================


\end{thebibliography}
\end{document}